\DeclareMathAlphabet{\mathcalligra}{T1}{calligra}{m}{n}
\DeclareFontShape{T1}{calligra}{m}{n}{<->s*[1.5]callig15}{}
\newtheorem{theorem}{Theorem}[section]
\newtheorem{lemma}[theorem]{Lemma}
\newtheorem{corollary}[theorem]{Corollary}
\theoremstyle{definition}
\newtheorem{definition}[theorem]{Definition}
\newtheorem{remark}[theorem]{Remark}
\newtheorem{theorem-definition}[theorem]{Theorem-Definition}
\numberwithin{equation}{section}
\newcommand{\CC} {\mathbb{C}}
\newcommand{\RR} {\mathbb{R}}
\newcommand{\ZZ} {\mathbb{Z}}
\newcommand {\shD} {\mathcal{D}}
\newcommand {\shH} {\mathcal{H}}
\newcommand {\shI} {\mathcal{I}}
\newcommand {\shS} {\mathcal{S}}
\newcommand {\shP} {\mathcal{P}}
\newcommand {\fok}  {\mathfrak{k}}
\newcommand {\fot}  {\mathfrak{t}}
\newcommand{\sExt}{\mathscr{E} \kern -1pt xt}
\newcommand {\sHom}{\mathscr{H}\kern-5pt\mathcalligra{om}}
\renewcommand {\Im} {\operatorname{Im}}
\renewcommand {\ker } {\operatorname{Ker}}
\newcommand {\Ker} {\operatorname{Ker}}
\newcommand {\rank} {\operatorname{rank}}
\newcommand{\Diff}{\operatorname{Diff}}
\newcommand{\sTor}{\mathscr{T} \kern -3pt or}
\newcommand {\vol} {\operatorname{vol}}
\begin{document}

\title{Quantum spaces associated to mixed polarizations and their limiting behavior on toric varieties
\\
\author{Dan WANG} 
}

\address{CAMGSD and Department of Mathematics, IST\\ University of Lisbon}
\email{dan.wang@tecnico.ulisboa.pt; dwang116@link.cuhk.edu.hk}

\date{}
\maketitle

\begin{abstract} Let $(X, \omega, J)$ be a toric variety of dimension $2n$ determined by a Delzant polytope $P$. As indicated in \cite{LW1}, $X$ admits a natural mixed polarization $\shP_{k}$, induced by the action of a subtorus $T^{k}$.
In this paper, we first establish the quantum space $\shH_{k}$ for $\shP_{k}$, identifying a basis parameterized by the integer lattice points of $P$. This confirms that the dimension of $\shH_{k}$ aligns with those derived from K\"ahler and real polarizations. Secondly, we examine a one-parameter family of Kähler polarizations $\shP_{k,t}$, defined via symplectic potentials, and demonstrate their  convergence to $\shP_{k}$. 
Thirdly, we verify that these polarizations $\shP_{k,t}$ coincide with those induced by imaginary-time flow. Finally, we explore the relationship between the quantum space $\shH_{k,0}$ and $\shH_{k}$, establishing that ``$\lim_{t\rightarrow \infty}\shH_{k,t}=\shH_{k}$".
\end{abstract}

\section{Introduction}
An important problem in geometric quantization is to explore the dependence of the quantum spaces on the choice of polarizations on a symplectic manifold $(X,\omega)$. Assume symplectic manifold $X$ admits a pre-quantum line bundle $(L,\nabla, h)$, which is a complex line bundle $L \rightarrow X$ with Hermitian metric $h$ and a Hermitian connection $\nabla$ such that the curvature form $F_{\nabla}=-i\omega$. A polarization $\shP$ on $X$ is an integrable Lagrangian subbundle of $TX\otimes \CC$. The quantum Hilbert space $\shH_{\shP}$ associated to a polarization $\shP$ is the subspace of $\Gamma(X,L)$ defined by:
\begin{equation}\shH_{\shP} = \{ s \in \Gamma(X, L) \mid \nabla_{\xi} s =0, \forall \xi \in \Gamma(X, \shP)\} .
\end{equation}

Another essential problem in geometric quantization is determining the quantum space $\shH_{\shP}$ associated to a polarization $\shP$.
There are three types of  polarizations on $X$ has been studied, K\"ahler polarization, real polarization, and mixed polarization (see Appendix \ref{ap-pol}). 
 K\"ahler polarization $\shP_{J}$ comes from an integral complex structure $J$ on $X$ such that $(X,\omega,J)$ is a K\"ahler manifold. In this case, the quantum space $\shH_{\shP_{J}}$ is the space of $J$-holomorphic sections. 
 Real polarization $\shP_{\RR}$ satisfying $\shP_{\RR} = \bar{\shP}_{\RR}$. A completely integrable system $f: M \rightarrow \RR^{n}$ gives a singular real polarization $\shP_{\RR} = \ker df$. In this case, the quantum space $\shH_{\shP_{\RR}}$ consists of distributional sections which are covariant constant along fiber with supports on Bohr-Sommerfeld fibers. 
Leung and Wang showed that the existence of mixed polarization $\shH_{\mathrm{mix}}$ on K\"ahler manifold with $T$-symmetry in \cite{LW1} and give an geometric description of the weight $\lambda$- quantum subspace $\shH_{\mathrm{mix},\lambda}$, for regular $\lambda$ in \cite{LW2}.


Assume $(X,\omega, J)$ is a $2n$-dimensional  toric variety admits a Hamiltonian torus $T^{n}$ action together with moment map $\mu_{P}: X \twoheadrightarrow P$. Then $X$ admits both K\"ahler polarization $\shP_{J}=T^{0,1}X$ with respect to $J$ and real polarization $\shP_{\RR}=\ker d \mu_{P}$ on $X$. From a physical perspective, the quantum space should be independent of the polarization choice. Specifically, the dimension of the quantum spaces should remain invariant. A result, often attributed to Danilov and Atiyah \cite{Da, GGK}, asserts that the number of integer points in $P$, which correspond to the images under $\mu_{P}$ of the Bohr-Sommerfeld fibers of the real polarization $\shP_{\RR}=\mathrm{Ker} d\mu$, equals the dimensionality of $H^{0}(X,L)$.

In this paper, we explore the mixed polarization $\shP_{k}$, as identified in \cite{LW1}, on toric variety $(X, \omega, J)$ associated with subtorus $T^{k}$-action. We begin by determining the quantum space $\shH_{k}$ associated to the polarization $\shP_{k}$.
 Especially, We have identified a basis for the quantum space $\shH_{k}$ that is parameterized by the integer points of the moment polytope.
This implies that the dimension of quantum space $\shH_{k}$, associated to the polarization $\shP_{k}$, matches the dimension of the quantum spaces associated to both K\"ahler polarization and real polarization.
 Additionally, we study a one-parameter family of K\"ahler polarizations, defined by potentials, which converges to $\shP_{k}$. We then demonstrate the conditions under which the K\"ahler polarizations, derived from this one-parameter family of symplectic potentials, coincide with the family induced by imaginary-time flow, as investigated in \cite{LW1}. Finally, we elucidate the relationship between quantum spaces associated to K\"ahler polarization $\shP_{k,0}$ and the mixed polarization $\shP_{k}$ by show that ``$\lim_{t\rightarrow \infty}\shH_{k,t}=\shH_{k}$".
Throughout this paper, we assume the following:
\begin{enumerate}
\item[$(*)$]: Let $(X, \omega, J)$ be a 2n-dimensional toric variety determined by a Delzant polytope $P$  with an associated moment map $\mu_{P}: X \twoheadrightarrow P \subset (\fot^{n})^{*}$.
 We consider the Hamiltonian $k$-dimensional subtorus action $\rho_{k}: T^{k} \rightarrow \Diff(X, \omega, J)$, and denote the corresponding moment map by $\mu_{k}: X \twoheadrightarrow \Delta_{k}  \subset (\fot^{k})^{*}$. Additionally , we pick a strictly convex function $\varphi_{k}: (\fot^{k})^{*} \rightarrow \RR$.
\end{enumerate}

In the process of geometric quantization, selecting a polarization is a fundamental step.  According to \cite{LW1}, given assumption $(*)$, a polarization $\shP_{k}$ can be constructed as follows:
 \begin{equation}
 \shP_{k}=(P_{J} \cap \shD^{k}_{\CC}) \oplus \shI^{k}_{\CC},
 \end{equation}
 where $\shD^{k}_{\CC}=(\Ker d\mu_{k}) \otimes \CC$ and ~$\shI^{k}_{\CC}= (\Im d\rho_{k}) \otimes \CC$.
When $k=n$, on the open dense subset $\mathring{X}$ of $X$, $\shP_{n}$ coincides with the real polarization $\shP_{\RR}=\Ker d\mu_{P}$ studied in \cite{BFMN}. For $1\le k < n$, $\shP_{k}$ represents a (singular) mixed polarization with $\mathrm{rk}_{\RR}({\shP_{k}}|_{\mathring{X}})=k$ (refer to \cite[Theorem 1.1]{LW1}).
The pre-quantum line bundle $(L, \nabla, h)$ is determined by Delzant polytope $P$ (see subsection \ref{pre-data}). To define the quantum space $\shH_{k}$ associated to $\shP_{k}$, it is necessary to embed the space of smooth sections $\Gamma(X, L)$ into the space of distributional sections $\Gamma_{c}(X,L^{-1})'$ using the Liouville measure $\vol_{X}=\frac{\omega^{n}}{n!}$. The quantum space $\shH_{k}$ is then defined by:
\begin{equation}
\shH_{k}= \{ \delta \in \Gamma(X, L^{-1})' \mid \nabla_{\xi}\delta  =0, \forall \xi \in \Gamma(X, \shP_{k})\}.
\end{equation}

  Letting $P_{\ZZ}$ denote $P \cap (\fot^{n})^{*}_{\ZZ}$. Inspired by \cite[Theorem 3.12]{LW2} and \cite[Theorem 1.1]{BFMN}, for each $m \in P_{\ZZ}$, we defined $\delta_{k}^{m}$ due to stratification theorem by Sjamaar and Lerman \cite{SL}.
 \begin{definition}[Definition \ref{def3-1}]For any $m\in P_{\ZZ}$, we define {\em the distributional section} $\delta_{k}^{m} \in \Gamma_{c}(X,L^{-1})'$ associated to $\sigma_{k,0}^{m}$ and $\mu_{k}$ by: for any test section $\phi \in \Gamma_{c}(X,L^{-1})$ \begin{equation}
 \delta_{k}^{m} (\phi) =\frac{1}{ c^{m}_{k}} \int_{M^{q}_{k}}  \langle \sigma_{k,0}^{m}, \phi \rangle |_{M^{q}_{k}} \vol_{k}^{q},
 \end{equation}
where $c_{k}^{m}$ is a constant defined by: $c_{k}^{m} = \int_{M^{q}_{k}} \|\sigma_{k,0}^{m}|_{M^{q}_{k}}\| \vol_{k}^{q}.$
\end{definition}
Our first result establishes that there exists a basis for the quantum space $\shH_{k}$ parameterized by the integer point in $\shP$. Consequently, the dimension of $\mathrm{\shH_{k}}$ coincides to those of the quantum spaces associated to both K\"ahler and real polarizations.

\begin{theorem}[Theorem \ref{thm5-0}]
Under assumption $(*)$, $\{\delta_{k}^{m}\}_{m\in P_{\ZZ}}$ forms a basis of quantum space $\shH_{k}$. In particular, $\mathrm{dim}\shH_{k}=\mathrm{dim}H^{0}(X,L)$.
\end{theorem}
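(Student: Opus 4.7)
The plan is to establish the theorem in three steps: (i) verify that each distributional section $\delta_k^m$ lies in the kernel defining $\shH_k$; (ii) prove that the family $\{\delta_k^m\}_{m \in P_\ZZ}$ is linearly independent; and (iii) show that it spans the whole quantum space. The underlying geometric input throughout is Sjamaar--Lerman reduction applied to $\mu_k \colon X \twoheadrightarrow \Delta_k$: for each value $q$ arising as the image under the projection $\pi \colon (\fot^n)^* \twoheadrightarrow (\fot^k)^*$ of some $m \in P_\ZZ$, the reduced space $M_k^q = \mu_k^{-1}(q)/T^k$ inherits a K\"ahler structure, a reduced pre-quantum line bundle $L_q$, and a moment polytope equal to the slice $P \cap \pi^{-1}(q)$.

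For step (i), I would use the decomposition $\shP_k = (P_J \cap \shD^k_\CC) \oplus \shI^k_\CC$ and check each summand separately. Along $\shI^k_\CC$, the $T^k$-equivariance of $\sigma_{k,0}^m$ and of the fiberwise Liouville measure, combined with the integrality $m \in P_\ZZ$, gives the Bohr--Sommerfeld condition on the $T^k$-orbits and matches the $\nabla_\xi$-equation. Along $P_J \cap \shD^k_\CC$, restricting to $\mu_k^{-1}(q)$ and descending to $M_k^q$ identifies this part of $\shP_k$ with the K\"ahler polarization on the reduction, so $\sigma_{k,0}^m|_{M_k^q}$ is a $J_{\mathrm{red}}$-holomorphic section of $L_q$, which is precisely the defining property arranged in \cite{LW2}. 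Combining these yields $\delta_k^m \in \shH_k$.

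Step (ii) is comparatively routine: distributional sections with distinct $q_m := \pi(m)$ have disjoint supports; and for a fixed $q$, the restrictions $\sigma_{k,0}^m|_{M_k^q}$ over $m \in \pi^{-1}(q) \cap P_\ZZ$ descend to linearly independent holomorphic sections of $L_q \to M_k^q$ (the basis parameterized by lattice points of the slice), whose coefficients in a vanishing combination $\sum c_m \delta_k^m = 0$ can be recovered by pairing with suitable test sections, forcing $c_m = 0$.

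The main obstacle is step (iii), the spanning statement. Given $\delta \in \shH_k$, my plan is: first, use $\shI^k_\CC$-covariant-constancy together with a stationary-phase/wavefront argument in the $T^k$-directions, patterned on \cite[Theorem 1.1]{BFMN}, to show that $\delta$ is supported on $\bigcup_{q \in \pi(P_\ZZ)} \mu_k^{-1}(q)$; second, for each such Bohr--Sommerfeld value $q$, pair $\delta$ against test sections supported near $\mu_k^{-1}(q)$ so that, modulo the transverse delta along the fiber, $\delta$ induces a distribution on $M_k^q$; third, invoke the remaining covariant-constancy along $P_J \cap \shD^k_\CC$ to show this induced distribution is in fact a holomorphic section of $L_q$, and then apply Danilov--Atiyah on the reduced toric variety to see that the space of such sections has dimension equal to the number of lattice points in $P \cap \pi^{-1}(q)$. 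Assembling these contributions expresses $\delta$ uniquely as $\sum_{m \in P_\ZZ} c_m \delta_k^m$ and gives $\dim \shH_k = |P_\ZZ| = \dim H^0(X,L)$. The most delicate point is justifying the transverse-slice decomposition at singular strata of $\mu_k$, where the Sjamaar--Lerman stratification and the explicit symplectic cross-section near each stratum are used in an essential way.
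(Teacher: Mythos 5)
Your proposal follows essentially the same route as the paper: membership of each $\delta_{k}^{m}$ is checked summand-by-summand on $\shP_{k}=(\shP_{J}\cap\shD^{k}_{\CC})\oplus\shI^{k}_{\CC}$, and spanning is obtained by localizing the support of an arbitrary $\delta\in\shH_{k}$ to the Bohr--Sommerfeld fibers of $\mu_{k}$ and descending to the Sjamaar--Lerman reduction, exactly as the paper does via the cited results of \cite{LW2}. The only cosmetic differences are that you make the linear-independence step explicit and finish the dimension count with Danilov--Atiyah on the reduced slice, whereas the paper identifies the descended covariantly-constant section directly with restrictions of global holomorphic sections of $L$.
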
  
When $k=n$, $\shH_{n}$ coincides with $\shH_{\RR}$ studied in \cite{BFMN}. Moreover, the above theorem coincides with \cite[Theorem1.1]{BFMN}.
 Guillemin \cite{Gui2} established that the canonical toric K\"ahler metric and symplectic form on $(X, \omega, J)$ are determined by a symplectic potential $g_{0}: P \rightarrow \RR$. Abreu \cite{Ab1,Ab2} expanded on this by describing all toric complex structures through symplectic potentials of the form $g=g_{0}+ \varphi$, which adhere to a specific convexity condition (see Definition \ref{def-po}). Considering any $k$-dimensional subtorus $T^{k}$ of $T^{n}$, let $i_{k}^{*}$ represent the dual map corresponding to the inclusion $i_{k}: t^{k} \rightarrow t^{n}$ of the Lie algebras.
Drawing on the insights from \cite{BFMN} and \cite{HK}, we identified a one-parameter family of toric complex structures $J_{k,t}$, given by the family of symplectic potentials:
\begin{equation}
g_{k,t} = g_{0}+ t(\varphi_{k}\circ i_{k}^{*}): P \rightarrow \RR,
\end{equation}
such that the corresponding K\"ahler polarizations $\shP_{k,t}$ degenerate to $\shP_{k}$, as $t \rightarrow \infty$. 
\begin{theorem} [Theorem \ref{thm4-3}]
Under the assumption $(*)$, 
we have 
$$\lim_{t \rightarrow \infty} \shP_{k,t} =  \shP_{k},$$
 where the limit is taken in the positive Lagrangian Grassmannian of the complexified tangent space at each point in $X$.  
\end{theorem}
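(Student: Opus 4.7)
The plan is to reduce the claim to a pointwise computation in action--angle coordinates on the open dense orbit $\mathring{X} \cong P^\circ \times T^n$, and then to extend to the boundary strata of $X$ via the stratification by $T^n$-isotropy. Recall that in action--angle coordinates $(u,\theta)$, a toric K\"ahler structure determined by a symplectic potential $g$ on $P$ has antiholomorphic tangent bundle spanned by
\begin{equation}
v_l^{g} \;=\; \frac{\partial}{\partial u_l} + i \sum_{j=1}^n g_{lj}(u)\, \frac{\partial}{\partial \theta_j}, \qquad l = 1, \dots, n,
\end{equation}
where $g_{lj} = \partial^2 g / \partial u_l \partial u_j$. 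In particular $\shP_{k,t}$ has basis $\{v_l^{g_{k,t}}\}_{l=1}^n$.

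I would next fix a splitting $\fot^n = \fot^k \oplus \fot^{n-k}$ compatible with $i_k$ and write $u = (u',u'')$ and $\theta = (\theta', \theta'')$ accordingly. Because $(\varphi_k \circ i_k^*)(u) = \varphi_k(u')$, its Hessian is concentrated in the $(u',u')$-block and equals $\operatorname{Hess}(\varphi_k)$ there. Hence $(g_{k,t})_{lj} = (g_0)_{lj} + t(\varphi_k)_{lj}$ when $l,j \le k$, and $(g_{k,t})_{lj} = (g_0)_{lj}$ in every other block. For $l > k$, the vector $v_l^{g_{k,t}}$ is therefore independent of $t$; since $d\mu_k = du'$, each such vector is annihilated by $d\mu_k$, so $\{v_l^{g_{k,t}}\}_{l>k}$ already forms a basis of $\shP_J \cap \shD^k_\CC$.

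For $l \le k$ the vectors $v_l^{g_{k,t}}$ grow linearly in $t$; since the Grassmannian is insensitive to rescaling of basis vectors, I would replace $v_l^{g_{k,t}}$ by $t^{-1} v_l^{g_{k,t}}$ and compute
\begin{equation}
\frac{1}{t}\, v_l^{g_{k,t}} \;=\; \frac{1}{t}\frac{\partial}{\partial u_l} + \frac{i}{t}\sum_{j=1}^n (g_0)_{lj}\frac{\partial}{\partial \theta_j} + i \sum_{j \le k}(\varphi_k)_{lj}(u')\,\frac{\partial}{\partial \theta_j}.
\end{equation}
Pointwise this converges as $t \to \infty$ to $i \sum_{j \le k}(\varphi_k)_{lj}(u')\,\partial/\partial \theta_j$, and strict convexity of $\varphi_k$ guarantees that the matrix $[(\varphi_k)_{lj}]$ is invertible, so the $k$ limit vectors span $\operatorname{span}_\CC\{\partial/\partial \theta_j : j \le k\} = \shI^k_\CC$. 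Combined with the previous paragraph, the pointwise limit of $\shP_{k,t}$ in the Lagrangian Grassmannian is $(\shP_J \cap \shD^k_\CC) \oplus \shI^k_\CC = \shP_k$ at every point of $\mathring{X}$.

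The main obstacle is extending the convergence to the singular stratum $X \setminus \mathring{X}$, where action--angle coordinates degenerate and both $\shP_{k,t}$ and $\shP_k$ are only described via local models. I would handle this by passing to Guillemin's standard affine charts near each fixed stratum, where the symplectic potentials extend with the known logarithmic boundary behavior; on such a chart the block-diagonal analysis can be repeated in local complex coordinates adapted to the isotropy decomposition, and the resulting limit matches the description of $\shP_k$ on boundary strata given in \cite{LW1}. Positivity passes to the limit automatically, since each $\shP_{k,t}$ is K\"ahler and thus sits in the positive Lagrangian Grassmannian, whose closure contains the semi-positive mixed polarization $\shP_k$.
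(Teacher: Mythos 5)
Your argument is correct and follows essentially the same route as the paper: an adapted $\ZZ$-basis of $(\fot^{n})^{*}_{\ZZ}$ splitting off $\fot^{k}$, the block structure of $\mathrm{Hess}(g_{k,t})$ (only the $k\times k$ block depends on $t$), a pointwise frame computation on $\mathring{X}$ identifying the limit with $(\shP_{J}\cap\shD^{k}_{\CC})\oplus\shI^{k}_{\CC}$, and a separate appeal to local models for the boundary strata (where the paper simply invokes \cite[Lemma 4.2]{BFMN}). The only cosmetic difference is that you rescale the frame by $t^{-1}$ and use $\mathrm{Hess}(\varphi_{k})$ directly, whereas the paper writes $\partial/\partial\tilde{w}^{j}_{k,t}$ via $G_{k,t}^{-1}$ and shows those entries tend to zero.
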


When $k=n$, the family of K\"ahler polarizations $\shP_{n,t}$ coincides with the family of K\"ahler polarizations $\shP_{\CC}^{s}$ studied in \cite[Theorem 1.2]{BFMN}.
Let $\hat{\varphi}_{k}: \Delta_{k} \rightarrow \RR$ be another strictly convex function. Let $X_{\hat{\varphi}_{k}}$ be the Hamiltonian vector field on $X$ associated to the function $\hat{\varphi}_{k}\circ i_{k}^{*} \circ \mu_{P}$. 
 According to \cite[Theorem 3.19, 3.20, and 3.21]{LW1}, we are able to construct a one-parameter family of K\"ahler polarizations $\shP_{k,t}'$ using imaginary-time flow $e^{-itX_{\hat{\varphi}_{k}}}$.

 Our next result demonstrate that when $\varphi_{k}=\hat{\varphi}_{k}$ and $\shP_{k,0}=\shP_{k,0}'$, these two one-parameter families of K\"ahler polarizations coincide.

\begin{theorem}[Theorem \ref{thm3-0-4}]
Let $\{\mathcal{P}_{k,t}\}$ be a one-parameter family of K\"{a}hler polarizations given by the symplectic potentials $g_{k,t} = g_0 + \varphi_k \circ i_k^*$, and let $\{\mathcal{P}'_{k,t}\}$ be another one-parameter family of K\"{a}hler polarizations derived using the imaginary-time flow $e^{-itX_{\hat{\varphi}_k}}$. If $\shP_{k,0}=\shP_{k,0}'$ and $\varphi_{k}=\hat{\varphi}_{k}$, then
$$\mathcal{P}_{k,t} = \mathcal{P}'_{k,t}, ~\forall t\ge 0.$$
\end{theorem}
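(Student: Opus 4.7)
The plan is to show that both families $\{\shP_{k,t}\}$ and $\{\shP'_{k,t}\}$ are toric Kähler polarizations and that they are given by the same symplectic potential, from which the equality follows by uniqueness. First, I would observe that $\hat{\varphi}_k\circ i_k^*\circ \mu_P: X\to \RR$ is $T^n$-invariant, since it factors through the moment map $\mu_P$. Hence its Hamiltonian vector field $X_{\hat{\varphi}_k}$ commutes with the $T^n$-action, and the imaginary-time flow $e^{-itX_{\hat{\varphi}_k}}$ deforms the initial toric complex structure $J$ through $T^n$-invariant complex structures compatible with $\omega$. By the Guillemin--Abreu classification, there is therefore a smooth family of symplectic potentials $g'_{k,t}:P\to\RR$ so that $\shP'_{k,t}$ is the Kähler polarization attached to $g'_{k,t}$; the hypothesis $\shP_{k,0}=\shP'_{k,0}$ forces $g'_{k,0}=g_0$ up to affine terms (which do not affect the polarization).

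The crux of the argument is to identify $g'_{k,t}$ explicitly. In Guillemin-style action-angle coordinates $(x,\theta)$ on the open dense orbit $\mathring{X}\simeq P^\circ\times T^n$, the Hamiltonian $\hat{\varphi}_k\circ i_k^*\circ\mu_P$ depends only on the action variables through the function $f(x)=\hat{\varphi}_k(i_k^*(x))$. The effect of the imaginary-time flow of a $T^n$-invariant Hamiltonian of this form on the toric complex structure is, as worked out in \cite{BFMN} for the full-torus case and extended in \cite{LW1} to the subtorus setting (Theorems 3.19--3.21 cited above), precisely to replace the Hessian $\mathrm{Hess}(g_0)$ by $\mathrm{Hess}(g_0)+t\,\mathrm{Hess}(f)$ in the formula for $J_t$ in symplectic coordinates. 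Consequently $g'_{k,t}=g_0+t\,\hat{\varphi}_k\circ i_k^*$. Under the standing hypothesis $\varphi_k=\hat{\varphi}_k$, this matches $g_{k,t}=g_0+t\,\varphi_k\circ i_k^*$ on the nose, so $\shP_{k,t}$ and $\shP'_{k,t}$ are determined by the same symplectic potential and therefore coincide for all $t\geq 0$.

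The main obstacle is the middle step: rigorously matching the imaginary-time flow with the additive modification of the symplectic potential when $k<n$. Since $i_k^*:(\fot^n)^*\to(\fot^k)^*$ has a nontrivial kernel, the term $\hat{\varphi}_k\circ i_k^*$ is only strictly convex along the $k$ directions transverse to $\ker i_k^*$, and is merely constant in the remaining directions; one must therefore check that $\mathrm{Hess}(g_0)+t\,\mathrm{Hess}(\hat{\varphi}_k\circ i_k^*)$ remains positive definite on all of $P$ (which follows from strict convexity of $g_0$ together with positive semidefiniteness of the added term) so that $g'_{k,t}$ is a bona fide toric symplectic potential, and one must verify that the local computation of how $e^{-itX_{\hat{\varphi}_k}}$ transforms $J$ matches the corresponding Hessian-level identity even at the degenerate directions. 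This is where I would carefully invoke the explicit formulas from \cite{LW1}, rather than redo the computation from scratch, and then close the argument by the uniqueness of the symplectic potential up to affine functions.
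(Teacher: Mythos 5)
Your overall architecture matches the paper's: both arguments reduce the statement to showing that $J'_{k,t}$ is a toric complex structure whose symplectic potential $g'_{k,t}$ equals $g_0 + t\,\varphi_k\circ i_k^*$, and then conclude by Abreu's classification. The problem is the step you yourself flag as the crux. You justify the identity $g'_{k,t}=g_0+t\,\hat{\varphi}_k\circ i_k^*$ by asserting that the effect of the imaginary-time flow on the toric complex structure is ``precisely to replace $\mathrm{Hess}(g_0)$ by $\mathrm{Hess}(g_0)+t\,\mathrm{Hess}(f)$,'' citing \cite{BFMN} and \cite[Theorems 3.19--3.21]{LW1}. But those theorems of \cite{LW1} only construct the family $\shP'_{k,t}$ via the flow $e^{-itX_{\hat{\varphi}_k}}$ and establish its convergence to $\shP_k$; they do not compute the symplectic potential of the flowed complex structure. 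The coincidence of the flow-generated family with the additive modification of the potential is exactly the content of the theorem being proved, so as written your central step is either circular or rests on a reference that does not contain the claimed formula.

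The paper fills this gap with a concrete computation that your proposal omits entirely: it uses the K\"ahler-potential evolution formula $2h'_{k,t}=2h'_{k,0}-2t\varphi_k+2t\,\beta(X_{\varphi_k})$ from \cite[Theorem 3.2]{LW3} (note: \cite{LW3}, not \cite{LW1}), the Burns--Guillemin identity $\beta=\mathrm{Re}(i\bar\partial_{k,0}h_{k,0})=\sum_j x_j\,d\theta_j$ which yields $\beta(X_{\varphi_k})=\sum_{j=1}^{k}x_j\,\partial\varphi_k/\partial x_j$, and finally the Legendre transform $h=-g+\sum_i x_i y_i$ to convert the resulting K\"ahler potential back into the symplectic potential $g'_{k,t}=g'_{k,0}+t\,\varphi_k\circ i_k^*$. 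Without some version of this chain (or an equivalent direct computation of how $e^{-itX_{\hat{\varphi}_k}}$ acts on the potential), the proof is incomplete. Your remarks about positive-definiteness of $\mathrm{Hess}(g_0)+t\,\mathrm{Hess}(\varphi_k\circ i_k^*)$ and about the degenerate directions of $i_k^*$ are sensible sanity checks, but they address the well-definedness of $g_{k,t}$ rather than the missing identification of $g'_{k,t}$.
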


To establish this equivalence, we used the formula for the K\"ahler potential given by Leung and  Wang in \cite{LW3}, the relationship between K\"ahler potential and moment map studied by Burns and Guillemin in \cite{BG}, and Legendre transformation described by Abreu in \cite{Ab2}.

\begin{corollary}[Corollary \ref{com-family}]
Let $\{J_{k,t}\}$ be the one-parameter family of toric complex structures given by the potential $g_{k,t} = g_0 + \varphi_k \circ i_k^*$, and let $\{J'_{k,t}\}$ be another one-parameter family of complex structures derived using imaginary-time flow $e^{-itX_{\hat{\varphi}_k}}$. If $J_{k,0}=J_{k,0}'$ and $\varphi_{k}=\hat{\varphi}_{k}$, then $$J_{k,t} = J'_{k,t}, ~\forall t\ge 0.$$
\end{corollary}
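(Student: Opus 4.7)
The plan is to deduce this corollary immediately from Theorem \ref{thm3-0-4}, by invoking the standard fact that a K\"ahler polarization compatible with $\omega$ uniquely determines its underlying complex structure. Since $J_{k,0} = J'_{k,0}$ is equivalent to $\shP_{k,0} = \shP'_{k,0}$ (see below), and $\varphi_k = \hat{\varphi}_k$ by assumption, the hypotheses of Theorem \ref{thm3-0-4} are satisfied, so the theorem yields $\shP_{k,t} = \shP'_{k,t}$ for every $t \ge 0$. It then remains to promote this equality of complexified Lagrangian subbundles to an equality of complex structures.

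For this second step, I would recall the standard correspondence: for an $\omega$-compatible integrable complex structure $J$ on $X$, the associated K\"ahler polarization is $\shP_J := T^{0,1}_J X \subset TX \otimes \CC$, and conversely $J$ is recovered from $\shP_J$ as the unique real endomorphism of $TX$ acting as multiplication by $-i$ on $\shP_J$ and by $+i$ on $\overline{\shP_J}$, using the splitting $TX \otimes \CC = \shP_J \oplus \overline{\shP_J}$. Both $\{J_{k,t}\}$ and $\{J'_{k,t}\}$ are families of $\omega$-compatible complex structures --- the first by the convexity of the symplectic potential $g_{k,t}$ (Definition \ref{def-po}), the second by the imaginary-time-flow construction recalled from \cite{LW1}. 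Applying the reconstruction recipe pointwise, the identity $\shP_{k,t}(p) = \shP'_{k,t}(p)$ in the positive Lagrangian Grassmannian of $T_p X \otimes \CC$ forces $J_{k,t}(p) = J'_{k,t}(p)$ for every $p \in X$ and every $t \ge 0$, which is the assertion of the corollary.

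The entire substance of the statement is already carried by Theorem \ref{thm3-0-4}, whose proof combines the K\"ahler-potential formula of \cite{LW3}, the correspondence between K\"ahler potentials and moment maps from \cite{BG}, and the Legendre transform of \cite{Ab2}. Beyond invoking that theorem, the only point that merits explicit verification is the equivalence $J_{k,0} = J'_{k,0} \iff \shP_{k,0} = \shP'_{k,0}$: one implication is immediate from the definition $\shP_J = T^{0,1}_J X$, while the reverse implication is precisely the pointwise reconstruction of $J$ from $\shP_J$ described above. There is no serious obstacle at the level of the corollary itself --- it is a tautological translation of the theorem from the Grassmannian of polarizations to the space of $\omega$-compatible complex structures.
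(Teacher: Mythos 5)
Your proposal is correct, but it runs the logic in the opposite direction from the paper. The paper never argues from the statement of Theorem \ref{thm3-0-4} back to the complex structures; rather, the proof of Theorem \ref{thm3-0-4} itself proceeds by first reducing $\shP_{k,t}=\shP'_{k,t}$ to $J_{k,t}=J'_{k,t}$, then to equality of symplectic potentials $g_{k,t}=g'_{k,t}$ via Abreu's classification of toric complex structures, which is finally verified by the Legendre-transform computation. So in the paper the corollary is a byproduct of an intermediate step of that proof (the paper even has to check along the way that $J'_{k,t}$ is a toric complex structure so that Abreu's theorem applies), and the order of presentation is simply reversed. You instead take the theorem's conclusion as a black box and recover $J_{k,t}=J'_{k,t}$ pointwise from $\shP_{k,t}=\shP'_{k,t}$ via the standard reconstruction of an almost complex structure from the splitting $TX\otimes\CC=\shP\oplus\overline{\shP}$ (with $J$ acting by $-i$ on $\shP=T^{0,1}_JX$ and $+i$ on $\overline{\shP}$), together with the observation that for finite $t$ both families are genuine K\"ahler polarizations so the splitting holds. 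This is a clean, self-contained deduction that does not require reopening the proof of the theorem or knowing a priori that $J'_{k,t}$ is toric; what it costs is nothing beyond the (correctly stated) eigenspace convention, and your explicit check of the equivalence $J_{k,0}=J'_{k,0}\iff\shP_{k,0}=\shP'_{k,0}$ is exactly the point needed to reconcile the two slightly different hypotheses appearing in the introduction and in the body of the paper. Both routes are valid; yours is the more honest reading of the word ``corollary.''
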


 Our final result aims to elucidate the relationship between the quantum spaces $\shH_{k,0}$ and $\shH_{k}$ along the one-parameter family of K\"ahler polarizations $\{\shP_{k,t}\}_{t\ge0 }$. Specifically, we aim to demonstrate that 
``$\lim_{t\rightarrow \infty}\shH_{k,t}=\shH_{k}$" 
in the following sense. 
\begin{theorem}[Theorem \ref{thm5-1}]
Under assumption $(*)$, for any strictly convex function $\varphi_{k}$ in a neighborhood of $\Delta_{k}$ and $m \in P \cap \fot^{*}_{\ZZ}$, consider the family of $L^{1}$-normalized $J_{k,t}$-holomorphic sections $\frac{\sigma_{k,t}^{m}}{\|\sigma_{k,t}^{m}\|_{L_{1}}}$, under the injection $i: \Gamma(X, L) \rightarrow \Gamma_{c}(X,L^{-1})'$.
We observe that:
\begin{equation}
i\left(\frac{\sigma_{k,t}^{m}}{\|\sigma_{t}^{m}\|_{L_{1}}}\right) \longrightarrow \delta_{k}^{m}, \text{as}~ t \rightarrow \infty.
\end{equation}
In the sense that for any $\phi \in \Gamma_{c}(X, L^{-1})'$, we have:
\begin{equation}
\lim_{t \rightarrow \infty} \int_{X} \langle \frac{\sigma_{k,t}^{m}}{\|\sigma_{k,t}^{m}\|_{L_{1}}}, \phi \rangle e^{\omega} = \frac{1}{c_{k}^{m}}\int_{X_{k}^{q}} \langle \sigma_{k,0}^{m}, \phi \rangle |_{\mu_{k}^{-1}(0)} \vol_{k}^{q},
\end{equation}
where $q=i_{k}^{*}(m)$ and
$c_{k}^{m} = \int_{X^{q}_{k}} \|\sigma_{k,0}^{m}|_{X^{q}_{k}}\| \vol_{k}^{q}.$
\end{theorem}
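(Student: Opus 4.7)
The plan is to perform a Laplace-method asymptotic analysis as $t\to\infty$ separately on the numerator $\int_{X}\langle\sigma_{k,t}^{m},\phi\rangle\,e^{\omega}$ and the denominator $\|\sigma_{k,t}^{m}\|_{L^{1}}$, and then take their ratio. The key structural observation is that the deformation $g_{k,t}=g_{0}+t\,\varphi_{k}\circ i_{k}^{*}$ only modifies the symplectic potential in the $k$ directions picked out by $i_{k}^{*}$, so the sections $\sigma_{k,t}^{m}$ develop a Gaussian profile of width $O(t^{-1/2})$ transverse to the fiber $M_{k}^{q}=\mu_{k}^{-1}(q)$ while remaining essentially unchanged along $M_{k}^{q}$.

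First I would work on the action-angle chart $\mathring{X}\cong P^{\circ}\times T^{n}$ with coordinates $(x,\theta)$ and split $x=(y,z)$ so that $y=i_{k}^{*}(x)$ and $z$ is complementary. Using the standard Legendre-transform relation between the symplectic potential $g_{k,t}$ and the K\"ahler potential (as recorded in \cite{LW3} and \cite{BG}), one obtains a factorisation of the pointwise norm of the form
\begin{equation*}
\bigl\|\sigma_{k,t}^{m}(x,\theta)\bigr\|^{2}\;=\;C_{t}\,\bigl\|\sigma_{k,0}^{m}(x,\theta)\bigr\|^{2}\,e^{-2t\,\Phi_{q}(y)},
\end{equation*}
where $q=i_{k}^{*}(m)$, the prefactor $C_{t}>0$ depends only on $t$, and $\Phi_{q}(y)=\varphi_{k}(q)-\varphi_{k}(y)+\langle y-q,\nabla\varphi_{k}(y)\rangle$ is nonnegative, strictly convex on $\Delta_{k}$, vanishes only at $y=q$, and has positive-definite Hessian $\nabla^{2}\varphi_{k}(q)$ there. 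This is the step where the hypothesis of strict convexity of $\varphi_{k}$ is essential.

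Second, writing $e^{\omega}=dy\wedge dz\wedge d\theta$ on $\mathring{X}$ and recalling that the reduced measure $\vol_{k}^{q}$ on $M_{k}^{q}$ arises from $dz\wedge d\theta$ after the $T^{k}$-quotient, Laplace's method in the $k$-dimensional $y$-variable about its unique nondegenerate minimum $y=q$ yields, for every $\phi\in\Gamma_{c}(X,L^{-1})$,
\begin{equation*}
\int_{X}\langle\sigma_{k,t}^{m},\phi\rangle\,e^{\omega}\;=\;C_{t}\Bigl(\tfrac{\pi}{t}\Bigr)^{\!k/2}\bigl(\det\nabla^{2}\varphi_{k}(q)\bigr)^{-1/2}\!\int_{M_{k}^{q}}\!\langle\sigma_{k,0}^{m},\phi\rangle\bigl|_{M_{k}^{q}}\vol_{k}^{q}\;+\;o\!\bigl(C_{t}\,t^{-k/2}\bigr).
\end{equation*}
Applying the same asymptotic with $\phi$ replaced by $\sigma_{k,0}^{m}/\|\sigma_{k,0}^{m}|_{M_{k}^{q}}\|$ produces $\|\sigma_{k,t}^{m}\|_{L^{1}}$ with the same leading prefactor and the integral $c_{k}^{m}$ from Definition~3.1. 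Dividing numerator by denominator, the prefactor $C_{t}(\pi/t)^{k/2}(\det\nabla^{2}\varphi_{k}(q))^{-1/2}$ cancels exactly, producing $\frac{1}{c_{k}^{m}}\int_{M_{k}^{q}}\langle\sigma_{k,0}^{m},\phi\rangle|_{M_{k}^{q}}\vol_{k}^{q}=\delta_{k}^{m}(\phi)$.

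The main technical obstacle is that $M_{k}^{q}$ typically meets lower-dimensional toric strata of $X$, where the action-angle chart $\mathring{X}$ degenerates and the Laplace computation cannot be performed in a single global chart. I would address this by invoking the Sjamaar--Lerman stratification of $\mu_{k}^{-1}(q)$, the same device used in Definition~3.1 to define $\delta_{k}^{m}$ and $c_{k}^{m}$, carrying out the Laplace expansion in slice coordinates adapted to each symplectic stratum, and gluing the local asymptotics with a partition of unity. Outside any fixed tubular neighborhood of $M_{k}^{q}$ the factor $e^{-2t\Phi_{q}(y)}$ decays exponentially thanks to the strict convexity of $\varphi_{k}$ away from $q$, so both the far-region and singular-locus remainders are $o(C_{t}t^{-k/2})$ and drop out of the ratio. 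The compact support of $\phi$ and smoothness of $\sigma_{k,0}^{m}$ transverse to $M_{k}^{q}$ then upgrade the stratified pointwise asymptotic to the integral limit claimed in the statement.
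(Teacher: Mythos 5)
Your proposal is correct and follows essentially the same route as the paper: both factor the section as $\sigma_{k,t}^{m}=e^{-t\alpha_{m}}\sigma_{k,0}^{m}$ with $\alpha_{m}$ the pullback under $i_{k}^{*}\circ\mu_{P}$ of exactly your $\Phi_{q}$ (up to an additive constant), push the integrals forward along the fibers of $\mu_{k}$ to a $k$-dimensional integral over $\Delta_{k}$, and use concentration of $e^{-t\alpha_{m}}/\|e^{-t\alpha_{m}}\|_{L^{1}}$ at the unique minimum $q$ so that the $t$-dependent prefactors cancel in the ratio. The only cosmetic differences are that the paper invokes the concentration statement as a cited lemma (Lemma \ref{conv}, from \cite{BFMN}) rather than rerunning Laplace's method with explicit Gaussian constants, and your treatment of the lower-dimensional strata via the Sjamaar--Lerman stratification is, if anything, more careful than the paper's.
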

\begin{remark} When $k=n$, the above theorem coincide with the result of \cite[Theorem 1.3]{BFMN}.
\end{remark}

The study of quantum spaces associated to (singular) real polarization has been demonstrated by Śniatycki \cite{S}, by Weitsman \cite{Wei1, Wei2}, by Jeffrey and Weitsman \cite{JW}, and by Hamilton and Miranda \cite{HM}.
Andersen \cite{An2} has investigated the quantum spaces associated with mixed polarizations.

The relationships between quantizations associated with different polarizations have been extensively explored. Using the space of holomorphic symmetric tensor on the moduli space of stable bundles over a Riemann surface, Hitchin in \cite{Hi} constructed a projectively flat connection on a vector bundle over Teichm\"uller space. Quantum spaces associated to K\"ahler polarizations converging to quantum spaces associated to real polarizations were carried out 
by Kirwin and Wu for symplectic vector spaces \cite{KW};
by Hall \cite{Hal} and Florentino, Matias, Mour\~ao and Nunes \cite{FMMN1,FMMN2} for cotangent bundles of Lie groups;
by Baier, Florentino, Mour\~ao and Nunes for toric varieties \cite{BFMN}; 
by Guillemin and Sternberg, Hamilton and Konno for flag varieties \cite{GS3,HK}. Quantum spaces associated to K\"ahler polarizations converging to quantum spaces associated to mixed polarizations were carried out by Baier, Ferreira, Hilgert, Kaya, Mourão and  Nunes \cite{BFHMN, BHKMN}.

There are many previous works by others on closely related problems for toric varieties \cite{ CS, KMN1,KMN4}, flag varieties \cite{BHKMN}, cotangent bundles of compact Lie groups \cite{KMN2, MNP}, toric degenerations \cite{HHK, HK3}, and so on \cite{An3,An4,An5, BFHMN,BBLU, GMW, GS1, HK1, HK2, Hi,LY1, MNR}.

\subsection{Acknowledgements} The author would like to express gratitude to Naichung Conan Leung for numerous helpful discussions, to Siye Wu for insightful discussions. Appreciation is also extend to Reyer Sjamaar for helpful discussion on stratification in quotient spaces and to Daniel Burns for helpful discussion on the relationship between K\"ahler potential and moment map. The author is grateful to Alejandro Cabrera and Ziming Nikolas Ma for helpful suggestions. Special thanks to Qingyuan Jiang for many useful discussions. This work was supported by CAMGSD UIDB/04459/2020 and UIDP/04459/2020.

 \section{Preliminaries}

 \subsection{Burns-Guillemin's theorem}
In this subsection, we review the equivariant Darboux theorem for K\"ahler forms on $N = U \times T_{\mathbb{C}}^{k}$, where $U$ is an open and convex subset of $\mathbb{C}^{n-k}$.
We also discuss the relationship between K\"ahler potentials and moment maps as established by Burns and Guillemin in \cite{BG}.

 The action of $T^{k}$ on $N$ is through standard multiplication of $T^{k}$ on $T^{k}_{\CC}$. Consider $w_{1}, \cdots ,w_{n-k}$ and $z_{1}, \cdots, z_{k}$ as the standard coordinate functions on $U$ and $T^{k}_{\CC}$, respectively. Let $\omega$ be a $T^{k}$-invariant K\"ahler form that is Hamiltonian with respect to the $T^{k}$ action.  
Burns and Guillemin in \cite{BG} demonstrated the following:
 $\omega= \sqrt{-1} \partial \bar{\partial} \rho$,
 where $\rho$ is a $T^{k}$-invariant function. Namely,
\begin{equation}\label{eq-po} \rho = \rho(w_{1},\cdots,w_{n-k},t_{1},\cdots,t_{k}), t_{i}=|z_{i}|^{2}. \end{equation}

\begin{theorem}\label{thm2-0} \cite[Theorem 3.1] {BG} Let $\rho$ a strictly plurisubharmonic functions of the form
(\ref{eq-po}). 
Let $\mu=(\mu_{1}, \cdots, \mu_{k})$ be the moment map associated with the action of $T^{k}$ on $N$. Under the change of variables $t_{j}=e^{s_{j}}$, where $t_{j}=|z_{j}|^{2}$, one has:
$$\mu_{j} = \frac{\partial}{\partial s_{j}} f(w_{1},\cdots,w_{n-k}, s_{1}, \cdots,s_{k} )=  t_{j} \frac{\partial}{\partial t_{j}} \rho(w_{1},\cdots,w_{n-k}, t_{1},\cdots,t_{k}).$$
where $f(w_{1},\cdots,w_{n-k}, s_{1}, \cdots,s_{k} )= \rho(w_{1},\cdots,w_{n-k}, t_{1},\cdots,t_{k})$.
\end{theorem}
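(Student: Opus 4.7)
The plan is to compute $\iota_{X_j}\omega$ directly from the expression $\omega = i\partial\bar\partial\rho$, using the $T^{k}$-invariance of $\rho$, and then read off the moment-map component as a primitive of that one-form; the second equality of the statement will then follow from an elementary change of variables. The main obstacles are cosmetic rather than substantive: keeping track of the sign convention for the moment map, and justifying the commutation $[\mathcal{L}_{X_j},\bar\partial]=0$.

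I would introduce logarithmic-polar coordinates on the $T_{\CC}^{k}$ factor of $N$ by setting $z_j = \exp((s_j+i\theta_j)/2)$, so that $t_j = |z_j|^2 = e^{s_j}$ and the $T^{k}$-action is translation in the $\theta_j$ variables. The infinitesimal generator of the $j$-th $S^1$-factor is then
\[
X_j \;:=\; \partial/\partial\theta_j \;=\; iz_j\,\partial/\partial z_j - i\bar z_j\,\partial/\partial\bar z_j.
\]
By (\ref{eq-po}) the function $\rho$ depends on the $z$-coordinates only through $t_j=|z_j|^2$, so the chain rule gives $\partial\rho/\partial\bar z_\ell = z_\ell\,\partial\rho/\partial t_\ell$. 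Contracting the $(0,1)$-form $\bar\partial\rho$ with $X_j$ and using $\iota_{X_j}\,d\bar z_\ell = -i\bar z_j\,\delta_{j\ell}$ together with $\iota_{X_j}\,d\bar w_\alpha = 0$ yields
\[
\iota_{X_j}\bar\partial\rho \;=\; -i\,t_j\,\frac{\partial\rho}{\partial t_j}.
\]

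Since the $T^{k}$-action is holomorphic and $\rho$ is invariant, $\mathcal{L}_{X_j}\rho=0$ implies $\mathcal{L}_{X_j}\bar\partial\rho = \bar\partial\mathcal{L}_{X_j}\rho = 0$. Writing $\omega = i\,d\bar\partial\rho$ and applying Cartan's magic formula then gives
\[
\iota_{X_j}\omega \;=\; i\,\iota_{X_j}\,d\bar\partial\rho \;=\; -i\,d\bigl(\iota_{X_j}\bar\partial\rho\bigr) \;=\; -d\!\left(t_j\,\frac{\partial\rho}{\partial t_j}\right),
\]
so with the convention $d\mu_j = -\iota_{X_j}\omega$ and the additive constant chosen to match the global normalization of $\mu$, I conclude $\mu_j = t_j\,\partial\rho/\partial t_j$. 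For the first equality, the substitution $t_j=e^{s_j}$ turns $\partial/\partial s_j$ into $t_j\,\partial/\partial t_j$ on functions pulled back from $(w,t)$-coordinates, so $\partial f/\partial s_j = t_j\,\partial\rho/\partial t_j$, and the identity is proved.
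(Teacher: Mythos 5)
The paper does not actually prove this statement: it is imported verbatim from Burns--Guillemin \cite[Theorem 3.1]{BG}, so there is no internal proof to compare against. Your argument is the standard (and correct) derivation: contracting $\bar\partial\rho$ with the generator of the $j$-th circle gives $-i\,t_j\,\partial\rho/\partial t_j$, Cartan's formula plus invariance of $\bar\partial\rho$ turns this into a primitive for $\iota_{X_j}\omega$, and the chain rule handles the $s_j$ versus $t_j$ statement. The only blemish is a factor-of-two slip in your coordinate bookkeeping: with $z_j=\exp((s_j+i\theta_j)/2)$ one has $\partial/\partial\theta_j=\tfrac{i}{2}z_j\partial_{z_j}-\tfrac{i}{2}\bar z_j\partial_{\bar z_j}$, i.e.\ half of the standard torus generator $iz_j\partial_{z_j}-i\bar z_j\partial_{\bar z_j}$; since the computation itself uses the latter (correct) vector field throughout, this does not affect the conclusion.
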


\subsection{Prequantum data of toric varieties}\label{pre-data}
In this section, we revisit the toric K\"ahler structures of a toric variety $(X, \omega, J)$ determined by the Delzant polytope $P$ along with the Hamiltonian $T^{n}$-action and the moment map $\mu_{P}: X \twoheadrightarrow P$. The Lie algebra of $T^{n}$ is denoted by $\fot^{n}$. The construction of the Toric variety $X$ is recalled from both symplectic and complex perspectives (refer to \cite{Gui1} for the former and \cite{F}, \cite{CJH} for the latter), following the conventions established in \cite{HK}. These viewpoints are identified based on a selection of symplectic potentials as detailed in \cite{Ab1, Ab2, Gui1, Gui2} within subsection 2.3. Moreover, the polytope $P$ leads to the determination of an equivariant $J$-holomorphic line bundle $(L, h, \nabla)$ with curvature $-i \omega$ (refer to  \cite{Kos}).
Consider the bounded Delzant polytope $P$ defined as:
\begin{equation}\label{eq-poly}
P = \{ p \in (\fot^{n})^* ~|~ \langle p, r_j \rangle + \lambda_j \ge 0 , j=1, \dots, d \} \subset (\fot^{n})^{*},
\end{equation}
 where $\langle ~, ~ \rangle: (\fot^n)^* \times \fot^n \rightarrow \RR$ is the natural pairing and $r_j$ stands for a primitive vector in the lattice $\fot_{\ZZ}^{n} \subset \fot^{n}$ for $j=1, \dots, d$.
It  is assumed that $\lambda_1, \dots, \lambda_d \in \ZZ$. 
Define:
\begin{equation}\label{wall}
l_j(p)= \langle p, r_j \rangle + \lambda_j, ~~~~~~ F_j = \{ p \in (\fot^{n})^* ~|~ l_j(p) = 0 \} , j=1, \dots, d.
\end{equation}
Let $T^d$ be a real torus with the Lie algebra $\fot^d$ and let $X_1, \dots, X_d \in\fot^d_{\ZZ}$ be the basis of $\fot^d$. Let $u_1, \dots, u_d \in (\fot^{d})^*$ be the dual basis of $X_1, \dots, X_d \in\fot^d_{\ZZ}$.  
Define $\lambda_P= \lambda_1 u_1 + \dots + \lambda_d u_d \in (\fot^{d})^*_{\ZZ}$. Consider the surjective Lie algebra homomorphism $\pi: \fot^d \rightarrow \fot^n$ defined by $\pi(X_j)=r_j$ for $j=1, \dots, d$, which leads to the exact sequence:
\begin{equation}
0\rightarrow K \rightarrow T^{d} \rightarrow T^{n} \rightarrow 0.
\end{equation}
Here $K \subset T^{d}$ is a connected subtorus  with the Lie algebra $\fok$. Toric varieties will be constructed from the Delzant polytope in the forthcoming subsections \ref{symp-view} and \ref{comp-view}.

\subsubsection{Construct the toric variety $(X,\omega_{P})$ from a symplectic viewpoint}  \label{symp-view}
Let $\omega_{std}=\frac{i}{2} \sum_{j} dz_{j} \wedge d\bar{z}_{j}= \sum_{j} dx_{j} \wedge dy_{j}$ be the standard symplectic form on $\CC^d$, where $x_j$ and $ y_j$ are the real and imaginary parts of $z_j$ respectively.
The natural action of $T^d$ on $\CC^d$ induces a moment map 
\begin{equation}
\mu_{T^d}: \CC^d \rightarrow (\fot^d)^*, z \mapsto \mu_{T^d}(z_1, \dots, z_d)=\frac{1}{2}\sum_{j=1}^d|z_j|^2u_j.
\end{equation}

Let $\iota^*: (\fot^d)^* \rightarrow  \fok^*$ be the dual map of the inclusion $\iota :  \fok \rightarrow \fot^d$. Then moment map $\mu_K: \CC^d \rightarrow \fok^*$ for the action of the subtorus $K$ on $({\CC}^d, \omega_{std})$ is given by 
\begin{equation}\mu_{K}(z)=\frac{1}{2}\sum_{j=1}^d|z_j|^2 \iota^* u_j.
\end{equation}
The compact symplectic toric manifold $X$ is defined as the symplectic quotient $X=\mu_K^{-1}(\iota^* \lambda_P)/K$, and the symplectic structure $\omega_{P}$ is determined by $\pi^{*} \omega_{P}= \omega_{std}|_{\mu_K^{-1}(\iota^* \lambda_P)}$. 
The quotient torus $T^n=T^d/K$ acts on $(X, \omega_{P})$ with the moment map $\mu_{P}: X \twoheadrightarrow P \subset (\fot^n)^*$. This leads to the following diagram:
\[
\begin{tikzcd}[row sep=1.5em, column sep=3.5em]
1 \arrow{d}   && 0 \\
K \arrow{d}  \arrow[bend left]{r}& (\CC^{d},\omega_{std}) \arrow[equal]{d}{} \arrow{r}{\mu_{K}} & (\RR^{k})^{*} \arrow{u} \\
T^{d} \arrow{d} {\tilde{\pi}} \arrow[bend left]{r} &(\CC^{d},\omega_{std})  \arrow{r}{\mu_{T^{d}}}  &(\RR^{d})^{*} \arrow{u} {\iota^{*}} \\
T^{n} \arrow{d} \arrow[bend left]{r} & (X, \omega_{P}) \arrow{r}{\mu_{P}} & (\RR^{n})^{*} \arrow{u} {\pi^{*} }\\
1&&0\arrow{u}
\end{tikzcd}
\]
Take any $z \in \mu_{K}^{-1}(\iota^{*} \lambda_{P})$, we have $\iota^{*}(\mu_{T^d}(z)-\lambda_P)=0$. It turns out that 
$$ \mu_{T^d}(z)-\lambda_P \in \ker(\iota^* ) = \mathrm{Image} (\pi^* ),$$
and $\mu_{P}([z])$ is given by $\mu_{P}([z]) = (\pi^*)^{-1}(\mu_{T^d}(z)-\lambda_P) \in (\RR^{n})^{*}=(\fot^n)^*$. Note that, 
$$\mu_{P}(X_{P}) = (\pi^{*})^{-1}(\Im \mu_{T^{d}} \cap  (\iota^{*})^{-1}(\iota^{*} \lambda_{P}))=P.$$
Let $\tilde{L}_{symp}=\CC^d \times \CC$ be the trivial line bundle with the standard Hermitian metric $\tilde{h}$ and a Hermitian connection $\tilde{\nabla}$, where 
 \begin{equation}
\tilde{\nabla}=d - \sqrt{-1}\pi \sum_{i=j}^d (x_j dy_j -y_j dx_j), \text {and} ~F_{\tilde{\nabla}} = -i 2 \pi \omega.
\end{equation}
Let $\mathrm{Exp}_{T^d}:\fot^d \rightarrow T^d$ be the exponential map. Define the action of $T^d$ on $\tilde{L}_{symp}$ by: for any $(z,v) \in \CC^{d} \times \CC$ and $\xi \in \fot^{d}$,
$$(z,v)\mathrm{Exp}_{T^d} \xi = (z \mathrm{Exp}_{T^d}\xi, ve^{ \sqrt{-1} \langle \lambda_{P}, \xi \rangle})$$
where  $\lambda_P= \lambda_1 u_1 + \dots + \lambda_d u_d \in (\fot^{d})^*_{\ZZ}$. It's easy to check that this action preserves the Hermitian metric $\tilde{h}$ and Hermitian connection $\tilde{\nabla}$.
Then we define a prequantum line bundle $(\tilde{L}_{symp}, \tilde{h},\tilde{\nabla})$ on $(X, \omega_{P})$ as the quotient of the restriction of $\tilde{L}_{symp}$ to $\mu_K^{-1}(\iota^* \lambda_\Delta)$ by the action of the subtorus $K$.

It's straightforward to see that the quotient torus $T^n=T^d/K$ acts on $L_{symp}$, preserving $h$ and $\nabla$. 
Let $[z]_K \in X_{P}$ denote a point represented by $z \in \mu_K^{-1}(\iota^* \lambda_P)$, and let $[z,v]_K$ denote a point in $L_{symp}$ represented by $(z,v) \in \mu_K^{-1}(\iota^* \lambda_P) \times \CC$. Let $\mathring{X} = \mu_{P}^{-1}(\mathring{P})$ be the open dense subset of $X$, where $\mathring{P}$ is the interior of the Delzant polytope $P$. Then it is evident that 
$(\sqrt{2 l_1(p)}, \dots, \sqrt{2 l_d(p)}) \in \mu_K^{-1}(\iota^* \lambda_P)$ for any $p \in \mathring{P}$.
The map $\psi : \mathring{P} \times \fot^n/ \fot^n_\ZZ \rightarrow \mathring{X}$, defined by \begin{align}\label{symp-coord}
\psi (p, [q]) &=[(\sqrt{2l_1(p)}, \dots, \sqrt{2l_d(p)})]_K \mathrm{Exp}_{T^n}(q) \\
&= [(\sqrt{2l_1(p)} e^{ \sqrt{-1} \langle u_1, \tilde{q} \rangle}, \dots, \sqrt{2l_d(p)}e^{\sqrt{-1} \langle u_d, \tilde{q} \rangle})]_K \nonumber
\end{align}
is a diffeomorphism, 
where $\tilde{q} \in\fot^d$ is a lifting of $q$ such that $\pi(\tilde{q})=q$. 
It's easy to see that 

\begin{align*}
\mu_{P} \circ \psi(p, [q])&= \mu_{P}([(\sqrt{2l_1(p)} e^{ \sqrt{-1} \langle u_1, \tilde{q} \rangle}, \dots, \sqrt{2l_d(p)}e^{\sqrt{-1} \langle u_d, \tilde{q} \rangle})]_K)\\
&= (\pi^{*})^{-1}(l_{1}(p)-\lambda_{1}, \cdots, l_{d}(p)-\lambda_{d})=p.
\end{align*}
for any $(p, [q]) \in \mathring{P} \times \fot^n/\fot^n_{\ZZ}$. We begin by taking a trivialization of the prequantum line bundle $L_{symp}$ over
$\mathring{X}$ using a section $\mathring{\mathbbm{1}}$ of $L_{symp}|_{\mathring{X}}$. This section is defined as follows:
$$
\mathring{\mathbbm{1}}(p,[q])=[(\sqrt{2l_1(p)}, \dots, \sqrt{2l_d(p)}),1]_K \mathrm{Exp}_{T^n}(q) \in L_{symp}.
$$ 

Next, choose a $\ZZ$-basis $p_1, \dots, p_n \in (\fot^{n})^*_\ZZ$ and its dual basis $q_1, \dots, q_n \in \fot^n_\ZZ$. 
Let $\mathring{P}= \{ x=(x_1, \dots, x_n) \in \RR^n ~|~ \sum_{i=1}^n x_i p_i \in \mathring{P} \}$ be the interior of polytope $P$. 
This set up provides a symplectic coordinate $(x,[\theta])\in \mathring{P} \times \RR^n/\ZZ^n$ on $\mathring{P} \times\fot^n/\fot^n_\ZZ$, which can also be regarded as a coordinate on $\mathring{X}$ under the map $\psi$.

\subsubsection{Construct toric variety ($W_{P}, J)$ from a complex perspective}  \label{comp-view}
Let $P$ be a Delzant polytope defined as in \ref{eq-poly}, and let  $P^{0}$ be its set of vertices. 
Let $F_j \subset (\fot^{n})^*$ be the hyperplane defined by $l_{j} =0$, for $j=1, \dots, d$. We define $\CC^d_P$ as the union $\bigcup_{v \in P^{0}}\CC^d_v$, 
where 
\begin{equation}\CC^d_v = \{ z \in \CC^d ~|~ z_j \ne 0 ~~\text{if}~ j \in \{1, \dots, d \} \setminus \Lambda_v \},~ \text{with}~ \Lambda_v= \{ j ~|~ v \in F_j \}.
\end{equation}
We then define the compact complex toric variety $W_{P}$ as the quotient space $W_{P}=\CC^d_P/K_\CC$ with the induced complex structure $J$, where $K_\CC$ is the complexification of the subtorus $K$. 
Let $T^d_\CC$ be the complexification of the torus $T^d$. The quotient torus $T^n_{\CC}=T^d_{\CC}/K_{\CC}$ acts on $(W_{P}, J)$, and it is straightforward to see that the $T^{n}$-action preserves the complex structure $J$. Let $\tilde{L}_{comp}= {\CC}^d \times \CC$ be a trivial holomorphic line bundle on ${\CC}^d$. 
Define the action of $T^d_{\CC}$ on $\tilde{L}_{comp}$ as follows:
for any $(z,v) \in \CC^{d} \times \CC$ and $\xi \in \fot^{d}_{\CC}=\fot^{d} \otimes \CC$, 
\begin{equation}\label{eq-ac}(z,v)\mathrm{Exp}_{T^d_{{\CC}}}\xi = (z \mathrm{Exp}_{T^d_{\CC}} \xi, v e^{ \sqrt{-1} \langle \lambda_P, \xi \rangle} ),\end{equation}
 where $\lambda_P= \lambda_1 u_1 + \dots + \lambda_d u_d $.
 Then, we define a holomorphic line bundle $L_{comp}$ on $(W_{P}, J)$ as the quotient of the restriction of $\tilde{L}_{comp}$ to $\CC^d_P $ by the action of $K_\CC$. 
The quotient torus $T^n_\CC=T^d_\CC/K_\CC$ acts on $L_{comp}$, preserving its holomorphic structure $\bar{\partial}_{J}$. 
Let $[z]_{K_\CC}$ in $W_{P}$ denote a point represented by $z \in \CC^d_P$ and $[z,v]_{K_\CC}$ denote a point in $L_{comp}$ represented by $(z,v)$ in $\CC^d_P \times \CC$. There exists a meromorphic section $s_{\lambda}$ of $L_{comp}$ on $W_{P}$ associated with $\lambda_{P}$ defined by 
\begin{equation} 
s_{\lambda}([z]_{K_\CC}) =[z, \prod_{j=1}^d z_j^{\lambda_j}]_{K_\CC} \in L_{comp}~~ \text{for $ z \in \CC^d_P $}.
\end{equation} 
Note that $(\CC^\times)^{d}=\{ z \in \CC^{d} ~|~ z_i \ne 0 ~~ \text{for $i=1, \dots, d$} \} \subset \CC^d_P$. For $z\in (\CC^\times)^{d}$, we have $z_{j} \ne 0, j=1 \cdots d$. It implies that the section $s_{\lambda}$ is holomorphic and non-zero on $\mathring{W} = (\CC^\times)^d / K_\CC$. Therefore, $\mathring{s}_{\lambda} = s_{\lambda}|_{\mathring{W}}$ induces a holomorphic trivialization of $L_{comp}$ on $\mathring{W}$. For each $m \in P \cap (\fot^n)^*_{\ZZ}$, we define a section $\sigma^m$ of $L_{comp}$ by 
\begin{align}\label{def-ho-m}
\sigma^m ([z]_{K_\CC})=[z, \prod_{j=1}^d z_j^{l_j(m)}]_{K_\CC} \in L_{comp}~~ \text{for $ z \in \CC^d_P $.}
\end{align}
Since $l_{j}(m) \in \ZZ~ \text{for} ~j=1,\cdots,d$, we have that $\sigma^{m}$ is a homomorphic section. In fact, the set $\{ \sigma^m\}_{ m \in P \cap (\fot^{n})^*_{\ZZ} }$ forms a basis of the space of holomorphic sections $H^0(L_{comp}, \bar{\partial}_{J})$.
Now we provide a complex coordinate on the open dense subset $\mathring{W}$ of $W_{P}$. Choose a $\ZZ$-basis $p_1, \dots, p_n$ of $ (\fot^{n})^*_\ZZ$ and its dual basis $q_1, \dots, q_n$ in $\fot^n_\ZZ$, as described in Subsection \ref{symp-view}. 
We define a complex coordinate $\tilde{\psi}: \mathring{W} \rightarrow (\CC^\times)^n$ by 
\begin{equation}\label{comp-coord}
\tilde{\psi}([z]_{K_\CC})= (\prod_{j=1}^d z_j^{\langle p_1, r_j \rangle}, \dots, \prod_{j=1}^d z_j^{\langle p_n, r_j \rangle}),
\end{equation}
where $r_j \in\fot^n_\ZZ$ is the vector defined in \ref{eq-poly} for $j=1, \dots, d$. 
Since $\prod_{j=1}^d z_j^{\langle p_i, r_j \rangle}$ is a $K_\CC$-invariant meromorphic function on $\CC^d$, it descends to a meromorphic function on $W_{P}$.
If we set $(w_1, \dots,w_n)=\tilde{\psi}([z]_{K_\CC})$, then we have
\begin{align}\label{def-sigma-m}
\sigma^m & ([z]_{K_\CC}) 
=(\prod_{i=1}^n w_i^{\langle m, q_i \rangle })\mathring{s}_{\lambda}([z]_{K_\CC})~~\text{on $\mathring{W}$.} 
\end{align}

\subsection{Symplectic potentials on toric varieties}In Subsections \ref{symp-view} and \ref{comp-view}, we reviewed the construction of the symplectic toric manifolds $(X, \omega_{P})$ and the complex toric manifolds $(W_{P}, J)$ from a Delzant polytope $P$ defined in \ref{eq-poly} respectively.  
In this section, we identify them by canonical symplectic potentials $g_{0}$ due to Abreu \cite{ Ab1,Ab2} and Guillemin \cite{Gui1, Gui2} and denote $(X, \omega, J)$ as the toric variety determined by $P$ with symplectic structure $\omega$ and canonical complex structure $J$. The inclusion $\mu_K^{-1}(\iota^* \lambda_P) \subset \CC^d_P$ induces a diffeomorphism $\chi_{g_{0}}: X_{P} \rightarrow W_{P}$, given by the canonical symplectic potential $ g_{0}: \mathring{P} \rightarrow \RR$ as described in \cite{Gui1,Gui2}, with the following form:
\begin{equation} g_{0}(p)=\frac{1}{2 } \sum_{j=1}^d l_j(p) \log l_j(p)+ ( \text{ a linear function on $(\fot^{n})^*$ } ) ~~ \text{ for $ p \in \mathring{P}$ }.
\end{equation} 
It's easy to see that $ g_{0} $ can be extended continuously to the boundary of $P$. We fix a $\ZZ$-basis $p_1, \dots, p_n \in (\fot^{n})^*_{\ZZ}$ and its dual basis $q_1, \dots, q_n \in \fot^n_\ZZ$ as in Subsections \ref{symp-view} and \ref{comp-view}. 
We fix $\tilde{q}_i \in \fot^d_\ZZ$ such that $\pi(\tilde{q}_i)=q_i$ for $i=1, \dots, n$. 
Let $(x,[\theta])$ be the symplectic coordinate on $\mathring{X}$ and let $(w_1, \dots, w_n)$ be the complex coordinate on $\mathring{W}$ induced by $p_1, \dots, p_n \in (\fot^{n})^*_\ZZ$ respectively. 
If we write $p= \sum_{i=1}^n x_i p_i$, then, by (\ref{symp-coord}) and (\ref{comp-coord}) we have 
\begin{equation}
w_i (\chi_{g_{0}}(x, [\theta]))
= \prod_{j=1}^d (\sqrt{2l_j(p)} e^{ \sqrt{-1} \sum_{l=1}^n \langle u_j, \tilde{q}_l \rangle \theta_l})^{\langle p_i, r_j \rangle}
= e^{ (\frac{\partial g_{can}}{\partial x_i} + \sqrt{-1} \theta_i)}.
\end{equation}

In \cite{Ab1, Ab2}, Abreu demonstrated that all toric complex structures can be characterized by a symplectic potential that satisfies the conditions outlined in the subsequent definition.
\begin{definition}\label{def-po}
{\rm A function $g \in C^0 (P)$ is a {\em symplectic potential} if and only if it satisfies the following condition:
\newline
$(1)$ $g - g_{0} \in C^\infty(P)$,
\newline
$(2)$ The Hessian $\mathrm{Hess}_p g$ of $g$ at $p$ is positive definite for any $p \in \mathring{P}$,
\newline
$(3)$ there exists a strictly positive function $\beta \in C^\infty (P)$ such that 
$$
\det(\mathrm{Hess}_p g)=[\beta(p) \prod_{j=1}^d l_j(p)]^{-1} ~~, \forall p \in \mathring{P}.$$
The set of symplectic potentials is denoted by $\shS(P)$.}
\end{definition}
The results presented below originate from \cite{ Ab1,Ab2, Gui1,Gui2}, with additional contributions from \cite{BFMN} and a review provided in \cite{HK}.
\begin{theorem}\cite[Theorem 5.3]{HK}\label{com-sym}
Let $P \subset (\fot^{n})^* $ be a Delzant polytope.
Let $(X,\omega_{P})$  be a symplectic toric manifold and $(W_{P}, J)$ a complex toric variety constructed from $P$. 
Let $(L_{symp},h, \nabla)$ be a prequantum line bundle on $X$ and $(L_{comp},\bar{\partial}_{J})$ a holomorphic line bundle on $W_{P}$ constructed from $P$.
Fix a $\ZZ$-basis $p_1, \dots, p_n \in (\fot^{n})^*_\ZZ$. 
Let $(x,[\theta])$ be the symplectic coordinate on $\mathring{X}$ and $w=(w_1, \dots, w_n)$ the complex coordinate on $\mathring{W}$ induced by $p_1, \dots, p_n \in (\fot^{n})^*_{\ZZ}$ respectively. 
Then each $g \in \shS(P)$ defines a $T^n$-equivariant diffeomorphism $\chi_g : X \rightarrow W_{P}$ and a $T^n$-equivariant bundle isomorphism $\tilde{\chi}_g:  L_{symp} \rightarrow L_{comp}$ such that the following holds:  
\begin{enumerate}
\item The following diagram commutes: 
$$
\begin{tikzcd}
(L_{symp},h, \nabla) \arrow{rr}{\tilde{\chi}_g}\arrow{d}&& (L_{comp},\bar{\partial})\arrow{d} \\ 
(X,\omega_{P}) \arrow{rr}{\chi_g}  && (W_{P},J)
\end{tikzcd} 
$$
\item $(X,\omega_{P}, \chi_g^*J)$ is a K\"ahler manifold.

\item $\nabla$ is the Chern connection of the Hermitian holomorphic line bundle $(L_{symp}, h, \tilde{\chi}_g^* \bar{\partial_{J}})$.
\item $\chi_g|_{\mathring{X}}:  \mathring{X} \rightarrow \mathring{W}$ is a diffeomorphism given by 
\begin{equation}
w_i(\chi_g(x,[\theta]))=e^{ (\frac{\partial g}{\partial x_i}+ \sqrt{-1} \theta_i)}~~\text{$i=1, \dots, n$}.
\end{equation}
The map $\chi_g$ is independent of the choice of the basis $p_1, \dots, p_n \in (\fot^{n})^*_\ZZ$.
Moreover, if we write $w_i=e^{ (y_i+ \sqrt{-1}\theta_i)}$ for $i=1, \dots, n$, then the inverse mapping $(\chi_g|_{\mathring{X}})^{-1}: \mathring{W} \rightarrow \mathring{X}$ is given by 
\begin{equation}
x_i(\chi_g^{-1}(w))=\frac{\partial h}{\partial y_i}, ~~\theta_i((\chi_g^{-1})(w))=\theta_i ~~\text{for $i=1, \dots, n$,} 
\end{equation} 
where $h(y)=-g(x(y)) + \sum_{i=1}^n x_i(y)y_i$ is given by Legendre transformation.
\item
$\displaystyle \tilde{\chi}_g^* \mathring{s}_{\lambda} = e^{ (g-\sum_{i=1}^n x_i \frac{\partial g}{\partial x_i} )}\mathring{\mathbbm{1}}$ on $\mathring{X}$.
\end{enumerate}
 On the other hand, if $\chi:  X \rightarrow W_{P}$ is a $T^n$-equivariant diffeomorphism such that $(X,\omega_{P}, \chi^*J)$ is a K\"ahler manifold and that $\chi$ is homotopic to $\chi_{g_{0}}$, then there exists $g \in \shS(P)$ such that $\chi=\chi_g$. 
\end{theorem}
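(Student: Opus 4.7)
The plan is to establish this as a structure theorem in two directions: construct $\chi_g$ and $\tilde{\chi}_g$ from a given symplectic potential $g \in \shS(P)$, and conversely recover a potential from any equivariant K\"ahler structure homotopic to the canonical one.

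First, I would define $\chi_g$ on the open dense orbit $\mathring{X}$ by the explicit formula $w_i(\chi_g(x,[\theta])) = \exp\bigl(\partial g/\partial x_i + \sqrt{-1}\,\theta_i\bigr)$. The positive-definiteness of $\mathrm{Hess}_p\, g$ (Definition \ref{def-po}(2)) guarantees that $x \mapsto \partial g/\partial x$ is a diffeomorphism from $\mathring{P}$ onto its image in $\RR^n$, with inverse given by the Legendre transform $h(y) = -g(x(y)) + \sum_i x_i(y) y_i$; this yields the formula for $\chi_g^{-1}$ in item (4). Equivariance under $T^n$ and independence of the choice of lattice basis $p_1,\dots,p_n$ follow from the intrinsic nature of Legendre duality together with the linearity of the change of basis in the $\theta$-coordinates.

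The main obstacle is extending $\chi_g$ across the boundary strata of $X$, where the torus action is non-free, to a global diffeomorphism onto $W_P$. This is precisely what the conditions $g - g_0 \in C^\infty(P)$ and the determinant condition in Definition \ref{def-po} enforce: writing $g = g_0 + \varphi$ with $\varphi \in C^\infty(P)$, one obtains $\chi_g = \chi_{g_0} \circ F_\varphi$ for a smooth fiberwise $T^n$-equivariant diffeomorphism $F_\varphi$ of $X$, since the correction factor $e^{\partial\varphi/\partial x_i}$ stays smooth and nonvanishing up to $\partial P$. The canonical map $\chi_{g_0}$ supplies the baseline identification due to Guillemin, whose construction already handles the singular strata. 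The bundle isomorphism $\tilde{\chi}_g$ is then defined by declaring $\tilde{\chi}_g^*\mathring{s}_{\lambda} = e^{\,g - \sum_i x_i \partial g/\partial x_i}\, \mathring{\mathbbm{1}}$ on $\mathring{X}$ and extended globally by the same boundary analysis. Properties (2) and (3) follow by direct computation: $\chi_g^*J$ together with $\omega_P$ defines a K\"ahler structure with positive-definiteness of the Hessian ensuring compatibility, and $\nabla$ coincides with the Chern connection of $(L_{symp}, h, \tilde{\chi}_g^*\bar{\partial}_J)$ by uniqueness, since both connections are compatible with $h$ and the holomorphic structure, and agree on $\mathring{X}$ by inspection of the K\"ahler potential $g - \sum_i x_i \partial g/\partial x_i$.

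For the converse, given a $T^n$-equivariant diffeomorphism $\chi : X \to W_P$ such that $(X, \omega_P, \chi^*J)$ is K\"ahler and $\chi$ is homotopic to $\chi_{g_0}$, I would apply the Burns--Guillemin theorem (Theorem \ref{thm2-0}) in equivariant Darboux charts around points of $\mathring{X}$ to produce a local K\"ahler potential whose derivatives in the logarithmic coordinates $s_j = \log|z_j|^2$ recover the moment map. The $T^n$-invariance forces these local potentials to glue into a single strictly plurisubharmonic function $h$ on $\mathring{W}$, whose Legendre transform with respect to the $y$-variables yields the candidate $g$ on $\mathring{P}$. Smoothness of $g - g_0$ across $P$ and the determinant condition near $\partial P$ are then checked by comparing $\chi$ with $\chi_{g_0}$ under the homotopy hypothesis, which rigidifies the correct branch of Legendre duality near the singular strata where the torus action degenerates; this is the step where the smooth extension of both maps up to $\partial X$ must be exploited, and is the principal technical hurdle of the argument.
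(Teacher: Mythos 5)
This theorem is not proved in the paper at all: it is imported verbatim as \cite[Theorem 5.3]{HK}, with the surrounding text attributing the underlying results to Abreu, Guillemin and \cite{BFMN}. So there is no in-paper argument to compare yours against; what can be said is how your sketch relates to the standard proofs in those cited sources. Your outline does follow that standard route: Legendre duality from $\mathrm{Hess}\,g>0$ gives $\chi_g|_{\mathring{X}}$ and its inverse in item (4), the formula $\tilde{\chi}_g^*\mathring{s}_{\lambda}=e^{\,g-\sum_i x_i\partial g/\partial x_i}\,\mathring{\mathbbm{1}}$ defines the bundle map on the open orbit, and the converse is obtained by producing an invariant K\"ahler potential (Burns--Guillemin, Theorem \ref{thm2-0}) and Legendre-transforming it back to a symplectic potential, with the homotopy hypothesis selecting the component of $\chi_{g_0}$.

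The one place where your argument is genuinely too glib is the forward-direction boundary extension. You assert that $\chi_g=\chi_{g_0}\circ F_\varphi$ with $F_\varphi$ a smooth $T^n$-equivariant diffeomorphism of $X$ ``since the correction factor $e^{\partial\varphi/\partial x_i}$ stays smooth and nonvanishing up to $\partial P$.'' Smoothness and positivity of $e^{\partial\varphi/\partial x_i}$ on $P$ gives you a smooth equivariant map, but it does not by itself give that the map is a diffeomorphism, nor that $\chi_g^*J$ extends to a smooth integrable complex structure compatible with $\omega_P$ across the strata where the torus action is not free; on $\mathring{X}$ the complex structure is $\begin{pmatrix}0&-G^{-1}\\ G&0\end{pmatrix}$ with $G=\mathrm{Hess}\,g$, and its smooth nondegenerate extension to $\partial X$ is exactly what condition (3) of Definition \ref{def-po} (the behavior $\det G=[\beta\prod_j l_j]^{-1}$ with $\beta>0$ on all of $P$) is designed to control. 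The required check is a computation in the vertex charts $\CC^d_v$ of the Delzant construction, verifying that $G^{-1}$ and $G^{-1}\,\mathrm{Hess}$-type expressions extend smoothly and that the differential of $\chi_g$ remains invertible on each face. You correctly flag the analogous issue as the ``principal technical hurdle'' in the converse direction, but it is equally the hurdle in the forward direction, and your sketch passes over it by invoking only condition (1) of Definition \ref{def-po} where condition (3) is the operative hypothesis.
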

Finally, we review the work (\cite[Lemma 3.7]{BFMN}) of Baier, Florentino, Mour\~{a}o and Nunes, which will be used in the proof of convergence of quantum spaces in Theorem \ref{thm5-1}. 

\begin{lemma}\cite[Lemma 3.7]{BFMN} \label{conv}
For any $\psi$ strictly convex function in a neighbourhood of the moment polytope $P$ and any $m \in P_{\ZZ}$, the function: 
\begin{align*}
f_{m}: P & \rightarrow \RR\\
        x    &  \mapsto ^{t}(x-m)\frac{\partial \psi}{\partial x} - \psi(x)
\end{align*}
has a unique minimum at $x=m$ and 
$$
\lim_{t\rightarrow \infty} \frac{e^{-tf_{m}}}{\|e^{-tf_{m}}\|_{1}} \rightarrow \delta(x-m),$$
in the sense of distributions.
\end{lemma}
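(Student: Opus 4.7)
The plan is to treat the two claims separately: the uniqueness of the minimum reduces to a support inequality for $\psi$, while the distributional limit follows from Laplace's method, with the Hessian at $m$ providing the Gaussian scaling.

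For the minimum, I would compute directly
\begin{equation*}
f_m(x) - f_m(m) = \langle x-m,\,\nabla\psi(x)\rangle - \psi(x) + \psi(m) = \psi(m) - \psi(x) - \langle m-x,\,\nabla\psi(x)\rangle.
\end{equation*}
The right-hand side is the gap in the first-order support inequality for the strictly convex function $\psi$ at $x$ tested against $m$, hence is nonnegative and vanishes only when $m = x$. Differentiating, $\nabla f_m(x) = \mathrm{Hess}(\psi)(x)(x-m)$, so $m$ is the unique critical point of $f_m$ (by positive definiteness of $\mathrm{Hess}(\psi)$), and $\mathrm{Hess}(f_m)(m) = \mathrm{Hess}(\psi)(m)$ is positive definite—this is the essential ingredient for the second part.

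For the distributional limit I would use Laplace's method. Fix a continuous test function $\phi$ on $P$; the assertion is equivalent to
\begin{equation*}
\frac{\int_P \phi(x)\,e^{-t f_m(x)}\,dx}{\int_P e^{-t f_m(x)}\,dx} \longrightarrow \phi(m), \qquad t \to \infty.
\end{equation*}
Fixing $\epsilon > 0$ small, I would split $P = (P\cap B_\epsilon(m)) \sqcup (P\setminus B_\epsilon(m))$. On the complement of the ball, Step~1 and compactness of $P$ give $f_m - f_m(m) \geq \delta_\epsilon > 0$, so the integrals over that region are bounded by $\mathrm{vol}(P)\,\|\phi\|_\infty\,e^{-t(f_m(m)+\delta_\epsilon)}$, which is exponentially subdominant. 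On the ball I Taylor expand $f_m(x) = f_m(m) + \tfrac{1}{2}\langle x-m,\,H(x-m)\rangle + O(|x-m|^3)$ with $H = \mathrm{Hess}(\psi)(m)$, substitute $y = \sqrt{t}(x-m)$, and appeal to dominated convergence to obtain
\begin{equation*}
\int_{P\cap B_\epsilon(m)} \phi(x)\,e^{-tf_m(x)}\,dx \;\sim\; \phi(m)\,e^{-tf_m(m)}\,t^{-n/2}\int_{C} e^{-\tfrac{1}{2}\langle y,\,Hy\rangle}\,dy,
\end{equation*}
where $C \subset \RR^n$ is the tangent cone of $P$ at $m$. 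Applying the same asymptotic with $\phi\equiv 1$ to the denominator, the common Gaussian factor cancels and the ratio tends to $\phi(m)$, proving convergence in $\shD'(P)$.

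The main obstacle is that $m$ need not lie in $\mathring{P}$; indeed $m$ may be a vertex of $P$, in which case the rescaled domain $\sqrt{t}(P-m)$ limits not to $\RR^n$ but to the proper convex polyhedral cone $C$. The Gaussian integral $\int_C e^{-\langle y,Hy\rangle/2}\,dy$ is still finite and strictly positive thanks to positive definiteness of $H$, and crucially the same cone $C$ appears in both numerator and denominator, so its value cancels in the ratio. With this observation the Laplace estimate is uniform in the position of $m$ inside $P_\ZZ$, and no additional argument is needed at boundary lattice points.
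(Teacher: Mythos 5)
Your proof is correct. Note that the paper itself offers no proof of this lemma --- it is imported verbatim from \cite[Lemma 3.7]{BFMN} --- so there is no internal argument to compare against; your two steps (the first-order support inequality for the strictly convex $\psi$ giving the unique minimum at $x=m$, then a Laplace-type concentration estimate handling the case where $m$ lies on a face or vertex of $P$ via the tangent cone) constitute a sound self-contained proof and match the standard argument in the cited reference. One small remark: the full Gaussian asymptotics are more than is needed for the distributional statement --- since the same integral appears in numerator and denominator, it suffices to bound the denominator below by $c_\eta e^{-t(f_m(m)+\eta)}$ using continuity of $f_m$ and positivity of the measure of $\{f_m < f_m(m)+\eta\}$, which makes the outer region negligible without ever Taylor-expanding at $m$; this sidesteps the boundary-cone discussion entirely, though your version is also fine.
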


\section{Main results}

 \subsection{Quantum space $\shH_{k}$ associated to $\shP_{k}$}

$(*):$ 
Let $(X, \omega, J)$ be a 2n-dimensional toric variety determined by a Delzant polytope $P$ with moment map $\mu_{P}: X \twoheadrightarrow P \subset (\fot^{n})^{*}$.
 We consider the Hamiltonian $k$-dimensional subtorus action $\rho_{k}: T^{k} \rightarrow \Diff(X, \omega, J)$  with moment map $\mu_{k}: X \twoheadrightarrow \Delta_{k} \subset (\fot^{k})^{*}$. 
 Under assumption $(*)$, there exists a mixed polarization $\shP_{k}$ associated with $J$ and a Hamiltonian $k$-dimensional subtorus action by \cite[Theorem 3.8]{LW1}. From a physical perspective, the quantum space should be independent of the polarization choice. Specifically, the dimension of the quantum spaces should remain invariant. A result, often attributed to Danilov and Atiyah \cite{Da, GGK}, asserts that the number of integer points in $P$, which correspond to the images under $\mu_{P}$ of the Bohr-Sommerfeld fibers of the real polarization $\shP_{\RR}=\mathrm{Ker} d\mu$, equals the dimensionality of $H^{0}(X,L)$. Therefore, the dimension of the quantum space $\shH_{k}$ associated with $\shP_{k}$ is expected to equal the number of integer points in $P$, which matches the dimensions of the quantum spaces associated with both K\"ahler and real polarizations.

In this section, we will demonstrate that the dimension of $\shH_{k}$ equals the number of integer points in $P$ by showing that there is a natural basis for the quantum space $\shH_{k}$ indexed by the integer points in $P$ (see Theorem \ref{thm5-0}). 
Let $(L, h, \nabla)$ be the prequantum line bundle on $(X, \omega, J)$ determined by the moment polytope $P \in (\fot^{n})^*$ with curvature $F_{\nabla} = -i \omega$.  To address the polarization $\shP_{k}$ as defined previously, we extend the operator of covariant differentiation from smooth to distributional sections as in \cite{BFMN}. Consider the injection of smooth into distributional sections of $L$, determined by the Liouville measure, on any open set $U \subset X$,
 \begin{align*}
 i: \Gamma(U, L) & \rightarrow \Gamma_{c}(U,L^{-1})'\\
                         s & \mapsto i(s)(\phi)= \int_{U} \langle s, \phi \rangle e^{\omega} 
\end{align*} 
for any $\phi \in \Gamma_{c}(U,L^{-1})'$.
We extend the operator $\nabla_{\xi}$ on smooth sections to an operator on distributional sections, denoted by the same symbol $\nabla_{\xi}$, ensuring the following diagram remains commutative:
$$
\begin{tikzcd}
\Gamma(U, L) \arrow{r} {i} \arrow{d}{\nabla_{\xi}} &  (\Gamma_{c}(U, L^{-1}))' \arrow{d}{\nabla_{\xi}} \\
\Gamma(U, L)\arrow{r}{i} &  (\Gamma_{c}^{\infty} (U,L^{-1}))'.
\end{tikzcd}
$$
To determine $\nabla _{\xi} \delta$ for a general distributional section $\delta \in \Gamma_{c}(X, L^{-1})'$ not of the form $i(s)$, we define its transpose by integrating the operator $\nabla_{\xi}$ by parts. This yields, for any smooth section $s \in \Gamma(U, L)$ and smooth test section $\phi \in \Gamma_{c}(U, L^{-1})$,
\begin{equation}
 (\nabla_{\xi} i(s))(\phi) = \int_{U}\langle (\nabla_{\xi}s), \phi \rangle e^{\omega} =\int_{U} \langle s, -(div\xi \phi + \nabla^{-1} \phi)\rangle  e^{\omega}. 
 \end{equation}
Therefore, $\nabla^{}_{\xi}\delta$ can be characterized by its transpose:
\begin{equation}\label{eqcon}
(\nabla_{\xi} \delta)(\phi) = \delta(^{t}\nabla_{\xi} \phi),
\end{equation}
 for any $\phi \in \Gamma_{c}(U, L^{-1})$. Here, the transpose operator $^{t}\nabla_{\xi}$ is defined by
\begin{equation}^{t}\nabla_{\xi} \phi = - (div \xi \phi + \nabla_{\xi}^{-1} \phi).
\end{equation}
 Quantum spaces $\shH_{k}$ associated to polarizations $\shP_{k}$ are defined as
 \begin{equation}
 \shH_{k} = \{ \delta \in \Gamma_{c}(X, L^{-1})' \mid \nabla_{\xi} \delta=0, \forall~ \xi \in \Gamma(X, \shP_{k})\}.
 \end{equation}

For any $ q \in \Delta_{k}$, the level set $\mu_{k}^{-1}(q)$ is denoted by $X_{k}^{q}$ and forms a $T^{k}$-invariant coisotropic submanifold of $M$. We define the set of regular values of $\mu_{k}$ as $(\fot^{k})^{*}_{\mathrm{reg}}$. If $q \in (\fot^{k})^{*}_{\mathrm{reg}}$ and $T^{k}$ acts freely on $X^{q}_{k}$. Then the projection mapping
$$\pi: X^{q}_{k} \rightarrow X_{q,k}$$ is a principal $T^{k}$-fibration. There exists a unique symplectic form $\omega_{q,k}$ on $X_{q,k}$, such that $\pi^{*}\omega_{q,k} = \omega|_{X_{k}^{q}}$. Following \cite{LW2,LW3}, consider a connection $\alpha \in \Omega^{1}(X_{k}^{q}, \fot^{k})$ on $X^{q}_{k}$. The form  
\begin{equation}\pi^{*}(\frac{1}{(n-k)!}\omega_{q,k}^{n-k}) \wedge \alpha^{k}
\end{equation} is a volume form on $X_{k}^{q}$, 
denoted by $\vol_{k}^{q}$, where $\alpha^{k}$ is a $k$-form on $X^{q}_{k}$ defined by $\frac{\alpha \wedge \cdots \wedge \alpha}{v}$, with $v$ being a $T^{k}$-invariant top form on $\fot^{k}$. For general $q \in \Delta_{k}$, according to \cite[Theorem 5.9]{SL}, there exists an open dense subset $\check{X}_{k}^{q}$ of ${X}_{k}^{q}$, which also form a principal bundle over its quotient space. Notably, 
 $\check{X}_{k}^{q}=X_{k}^{q}$ when $q$ is a regular ralue. The volume form on $\check{X}_{k}^{q}$ is also denoted by $\vol_{k}^{q}$ by convention.

\begin{definition} \label{def3-1}For any $m\in P_{\ZZ}$, we define {\em the distributional section} $\delta_{k}^{m} \in \Gamma_{c}(X,L^{-1})'$ associated to $\sigma_{k,0}^{m}$ and $\mu_{k}$ by: for any test section $\phi \in \Gamma_{c}(X,L^{-1})$ $$\delta_{k}^{m} (\phi) =\frac{1}{ c^{m}_{k}} \int_{X^{q}_{k}}  \langle \sigma_{k,0}^{m}, \phi \rangle |_{X^{q}_{k}} \vol_{k}^{q},$$
where $c_{k}^{m}$ is a constant defined by: $c_{k}^{m} = \int_{X^{q}_{k}} \|\sigma_{k,0}^{m}|_{X^{q}_{k}}\| \vol_{k}^{q}.$
\end{definition}

\begin{theorem}\label{thm5-0}
Under assumption $(*)$, $\{\delta_{k}^{m}\}_{m\in P_{\ZZ}}$ forms a basis of quantum space $\shH_{k}$. In particular, $\mathrm{dim}\shH_{k}=\mathrm{dim}H^{0}(X,L)$.
\end{theorem}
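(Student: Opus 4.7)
The plan is to prove the theorem by establishing three properties in order: (a) each $\delta_k^m$ lies in $\shH_k$, (b) the family $\{\delta_k^m\}_{m\in P_\ZZ}$ is linearly independent, and (c) it spans $\shH_k$. The dimension statement then follows from Danilov/Atiyah's lattice-point count $\#P_\ZZ = \dim H^0(X,L)$.

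For (a), I would use the decomposition $\shP_k = (P_J\cap\shD^k_\CC)\oplus\shI^k_\CC$ to split the covariant-constancy check into two parts. For $\xi \in \Gamma(X, \shI^k_\CC)$, the vector field is a complexified fundamental vector field of the $T^k$-action; the real part records the weight $q = i_k^*(m)$ of $\sigma_{k,0}^m$ under the $T^k$-action, while the imaginary part $J\xi$ probes the direction along which $|\mu_k - q|$ increases. Computing $(\nabla_\xi \delta_k^m)(\phi) = \delta_k^m({}^t\nabla_\xi \phi)$ via the transpose formula (\ref{eqcon}) and restricting to $X_k^q$, the connection one-form contributions cancel against the weight of $\sigma_{k,0}^m$ exactly as in the Bohr-Sommerfeld analysis of \cite{BFMN} and the weight-subspace analysis of \cite[Theorem 3.12]{LW2}. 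For $\xi \in \Gamma(X, P_J\cap\shD^k_\CC)$, the vector field is tangent to the level set $X_k^q$ and $J$-holomorphic; since $\sigma_{k,0}^m$ is a $J$-holomorphic section whose restriction to $X_k^q$ is preserved by such flows, integration by parts against the $T^k$-invariant volume form $\vol_k^q$ again yields zero.

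For (b), the distributions $\delta_k^m$ and $\delta_k^{m'}$ with $i_k^*(m)\neq i_k^*(m')$ have disjoint supports $X_k^{q}$ and $X_k^{q'}$ and are trivially independent. When $i_k^*(m) = i_k^*(m') = q$ but $m\neq m'$, both are supported on $X_k^q$ but carry different $T^k$-weights, hence pair differently against test sections with prescribed $T^k$-equivariance by orthogonality of characters; this separates them. For (c), which I expect to be the main obstacle, I would argue as follows: given $\delta \in \shH_k$, the conditions $\nabla_\xi \delta = 0$ for $\xi \in \shI^k_\CC$ first force $T^k$-equivariant decomposition into weights, and then the $J\xi$ directions produce a Bohr--Sommerfeld-type constraint forcing each weight-$m$ component to be supported on $X_k^{i_k^*(m)}$, as in \cite{BFMN}. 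The remaining conditions from $\xi \in P_J\cap\shD^k_\CC$ then descend, via symplectic reduction at regular $q\in (\fot^k)^*_{\mathrm{reg}}$, to holomorphicity on the reduced K\"ahler manifold $X_{q,k}$, which is itself a toric variety of real dimension $2(n-k)$ whose line bundle has integer points precisely indexed by $\{m \in P_\ZZ : i_k^*(m) = q\}$. Counting contributions stratum by stratum, the weight-$m$ part of $\delta$ is determined up to a scalar multiple of $\delta_k^m$.

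The chief technical difficulty is the non-regular locus of $\mu_k$: when $q \in \Delta_k$ is a singular value, $X_k^q$ is not a principal $T^k$-bundle and the reduced space may have orbifold/singular structure. Here I would invoke the Sjamaar--Lerman stratification theorem \cite{SL} as already used in Definition \ref{def3-1}: the open dense principal stratum $\check X_k^q\subset X_k^q$ supports a well-defined $\vol_k^q$, and lower-dimensional strata contribute no additional independent distributional solutions because the codimension $\ge 2$ strata carry measure zero for $\vol_k^q$ and the corresponding reduced spaces contain no new lattice points beyond those already counted in the regular analysis. Assembling (a), (b), (c) yields that $\{\delta_k^m\}_{m\in P_\ZZ}$ is a basis of $\shH_k$, so $\dim \shH_k = \#P_\ZZ = \dim H^0(X,L)$, completing the proof.
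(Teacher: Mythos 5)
Your proposal follows essentially the same route as the paper: membership of $\delta_k^m$ in $\shH_k$ is checked by splitting $\shP_k$ into $\shI^k_\CC$ (where the connection form cancels against the weight of $\sigma_{k,0}^m$ on the level set $i_k^*(m)=q$) and $\shP_J\cap\shD^k_\CC$ (holomorphicity of $\sigma_{k,0}^m$), and spanning is obtained exactly as you outline, via the support, factorization and descent results of \cite{LW2} together with holomorphicity on the reduced spaces and the Sjamaar--Lerman stratification at singular values. One small correction to your part (a): elements of $\shI^k_\CC$ are complex scalar multiples $i\xi$ of the fundamental vector fields, not $J\xi$ --- the transverse directions along which $|\mu_k-q|$ varies do not lie in $\shP_k$, so no Bohr--Sommerfeld localization is needed for membership, only orbit-tangent directions; the localization enters only in your spanning step (c), as in the paper.
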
 

\begin{proof}

We first focus on demonstrating that $\delta^{m}_{k} \in \shH_{k}$. Let $q=i_{k}^{*}(m) \in \Im \mu_{k}$ and let $X_{k}^{q}=\mu_{k}^{-1}(q)$ be the level set with respect to $\mu_{k}$ and $q$. According to Sjamaar-Lerman's result in \cite{SL}, there is a stratification for $X^{q}_{k}$. We denote its maximal strata by $\check{X}_{k}^{q}$, which is an open dense subset of $X^{q}_{k}$ and is acted upon freely by $T^{k}$. 
Considering the definition of $\delta_{k}^{m}$, it is evident that $\delta_{k}^{m} \in \Gamma_{c}(X,L^{-1})'$ and, for any test section $\phi \in \Gamma(X,L^{-1})$,
\begin{equation}\label{eq3-0-15}
\delta_{k}^{m} (\phi) =\frac{1}{ c^{m}_{k}} \int_{X^{q}_{k}}  \langle \sigma_{k,0}^{m}, \phi \rangle |_{X^{q}_{k}} \vol_{k}^{q}=\delta_{k}^{m} (\phi) =\frac{1}{ c^{m}_{k}} \int_{\check X^{q}_{k}}  \langle \sigma_{k,0}^{m}, \phi \rangle |_{\check X^{q}_{k}} \vol_{k}^{q}.
\end{equation}
To establish that $\delta^{m}_{k} \in \shH_{k}$, it suffices to prove that $\nabla_{\xi} \delta_{k}^{m} = 0$ for any $\xi \in \Gamma(X, \shP_{k})$.

Take any test section $\phi \in \Gamma_{c}(X,L^{-1})$, according to equations (\ref{eqcon} and \ref{eq3-0-15}) and \cite[Proposition 3.7]{LW2}, we have:
\begin{equation}
(\nabla_{\xi} \delta_{k}^{m})(\phi) = \delta_{k}^{m}\left(^{t}{\nabla}_{\xi} \phi\right)=\int_{\check X_{k}^{q}}  \langle \nabla_{\xi}\sigma_{k,0}^{m}, \phi \rangle |_{\check X_{k}^{q}} \vol_{k}^{q}.
\end{equation}
Since $\sigma_{k,0}^{m}$ is $T^{k}$-invariant section, we have 
\begin{equation}
\nabla_{\xi}\sigma_{k,0}^{m}=0, \forall \xi \in \Gamma(X, \shI^{k}_{ \CC})
\end{equation} On the other hand $\nabla_{\xi} \sigma_{k,0}^{m}=0, \forall \xi \in \Gamma(X, \shD_{\CC}^{k} \cap \shP_{J})$, since $\sigma_{k,0}^{m}$ is holomorphic section with respect to $J$.  This implies 
\begin{equation}\nabla_{\xi}\sigma_{k,0}^{m} =0, \forall \xi \in \Gamma(X, \shP_{k}).
\end{equation}
We are then able to conclude that, for all $\phi \in \Gamma_{c}(X,L^{-1})$ and $\xi \in \Gamma(X, \shP_{k})$,
\begin{equation}
(\nabla_{\xi} \delta^{s})(\phi)=\int_{\check X_{k}^{q}}  \langle \nabla_{\xi}\sigma_{k,0}^{m}, \phi \rangle |_{\check X_{k}^{q}} \vol_{k}^{q}=0.
\end{equation} 
Therefore we have: $\delta^{m}_{k} \in \shH_{k}.$ It remains to show there are no more solution. By \cite[Theorem 3.2]{LW2}, we have, for any $\delta \in \shH_{k}$, $\mathrm{supp} \delta \subset \cup\mu_{k}^{-1}(q)$. Without loss of the generality, we assume $\mathrm{supp} \delta \subset \mu_{k}^{-1}(0)$. By \cite[Corollary 3.3]{LW2} , we have $\delta(\phi|_{X^{0}_{k}})=0$ for any $\phi \in \Gamma(X,L^{-1})$ satisfying $\phi|_{X^{0}_{k}}=0$. This implies the existence of $\underline\delta \in \Gamma(\check X^{0}_{k},(L^{0})^{-1})$ such that: 
\begin{equation}
\delta=\iota(\overline{\delta}),
\end{equation}
where $\iota: \Gamma(X^{0}_{k}, (L^{0})^{-1})' \rightarrow \Gamma(X,L^{-1})'$. According to the above discussion and by \cite[Theorem 3.6]{LW2}, we have:
\begin{equation}\nabla_{\xi} \underline{\delta}=0, \forall \xi \in \Gamma(\check X^{0}_{k}, \shP_{k}).\end{equation}
This implies $\underline{\delta}=i(s)$ for some smooth section on $\check{X}^{q}_{k}$ which satisfies $\nabla_{\xi}s=0$, for any $\xi \in \Gamma(X^{0}_{k},\shP_{k})$. Here $\iota(i(s)) \in \shH_{k}$ implies that $s$ is the restriction of the linear combination of holomorphic section of $L$.
It follows that for any $\delta$ is spanned by $\{\delta_{k}^{m}\}_{m\in P_{\ZZ}}$.
\end{proof}

Let $\shH_{k,\lambda}$ be weight $\lambda$ subspace of $\shH_{k}$. By \cite[Theorem 3.2]{LW2}, we have:

\begin{corollary}Under the assumption $(*)$, for any $\lambda \in \Im \mu_{k} \cap (\fot^{k})^{*}$, we have
\begin{equation}
\shH_{k,\lambda} = \{\delta_{k}^{m}\}_{i_{k}^{*}(m)= \lambda}.
\end{equation}
\end{corollary}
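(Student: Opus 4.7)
The plan is to combine the basis description from Theorem \ref{thm5-0} with a straightforward $T^k$-weight computation for each generator $\delta_{k}^{m}$. Once it is established that $\delta_{k}^{m}$ lies in the weight subspace $\shH_{k,\,i_{k}^{*}(m)}$, the corollary follows by restricting the basis expansion of a general element of $\shH_{k}$ to the indices with the prescribed weight.

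First I would verify that $\delta_{k}^{m} \in \shH_{k,\,i_{k}^{*}(m)}$. The section $\sigma_{k,0}^{m}$ is (the $t=0$ specialization of) the holomorphic section $\sigma^{m}$ defined in (\ref{def-sigma-m}), which is a $T^{n}$-weight-$m$ eigenvector of $L$. Restricting the action through $i_{k}:\fot^{k}\hookrightarrow \fot^{n}$, it becomes a $T^{k}$-weight-$i_{k}^{*}(m)$ eigenvector. The defining integral in Definition \ref{def3-1} is performed against the test section over the $T^{k}$-invariant level set $X_{k}^{q}$, $q=i_{k}^{*}(m)$, using the $T^{k}$-invariant volume form $\vol_{k}^{q}$. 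Consequently, substituting $\exp(\xi)\cdot \phi$ for $\phi$ (with $\xi\in\fot^{k}$) scales the paired value by $e^{\sqrt{-1}\langle i_{k}^{*}(m),\xi\rangle}$, which is exactly the weight-$i_{k}^{*}(m)$ condition on $\delta_{k}^{m}$.

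Next I would take an arbitrary $\delta \in \shH_{k,\lambda}$ and expand it using the basis from Theorem \ref{thm5-0} as $\delta = \sum_{m\in P_{\ZZ}} c_{m}\,\delta_{k}^{m}$. Grouping the sum by the value of $i_{k}^{*}(m)$, the partial sum $\sum_{i_{k}^{*}(m)=\mu} c_{m}\,\delta_{k}^{m}$ lies in $\shH_{k,\mu}$ by the previous step, so the $T^{k}$-weight decomposition of $\Gamma_{c}(X,L^{-1})'$ forces the components with $\mu\neq \lambda$ to vanish. Combined with the linear independence of $\{\delta_{k}^{m}\}_{m\in P_{\ZZ}}$, this gives $c_{m}=0$ whenever $i_{k}^{*}(m)\neq \lambda$, and hence $\shH_{k,\lambda} \subseteq \Span\{\delta_{k}^{m}\}_{i_{k}^{*}(m)=\lambda}$. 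The reverse inclusion is immediate from the first step, and linear independence of the surviving $\delta_{k}^{m}$ is inherited from Theorem \ref{thm5-0}.

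The only delicate point is the weight computation in step one: the $T^{k}$-action on sections of $L$ is induced from the lift in (\ref{eq-ac}), so one must confirm the normalization so that the weight of $\sigma_{k,0}^{m}$ indeed comes out to $i_{k}^{*}(m)$ rather than a translate by some constant arising from $\lambda_{P}$. This is essentially bookkeeping. Alternatively, one may simply invoke [LW2, Theorem 3.2] (already cited in the proof of Theorem \ref{thm5-0}), which asserts that any $\delta \in \shH_{k,\lambda}$ is supported on $\mu_{k}^{-1}(\lambda)$; since $\mathrm{supp}(\delta_{k}^{m})\subset \mu_{k}^{-1}(i_{k}^{*}(m))$ by Definition \ref{def3-1}, intersecting with the basis immediately selects exactly those $m$ with $i_{k}^{*}(m)=\lambda$.
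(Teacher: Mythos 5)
Your proposal is correct, and your closing ``alternative'' is in fact exactly the paper's argument: the paper offers no computation at all, but simply deduces the corollary from \cite[Theorem 3.2]{LW2} (the statement that any element of the weight-$\lambda$ subspace is supported on $\mu_{k}^{-1}(\lambda)$) together with the basis $\{\delta_{k}^{m}\}_{m\in P_{\ZZ}}$ from Theorem \ref{thm5-0}; since $\mathrm{supp}\,\delta_{k}^{m}\subset \mu_{k}^{-1}(i_{k}^{*}(m))$ by construction, only the indices with $i_{k}^{*}(m)=\lambda$ survive. Your primary route --- the direct $T^{k}$-weight computation for $\sigma_{k,0}^{m}$ followed by the weight decomposition of a general basis expansion --- is a genuinely different and more self-contained argument: it avoids appealing to the support-localization theorem and instead verifies the eigenvector property by hand, at the cost of the normalization bookkeeping you rightly flag (the $\lambda_{P}$-twist in the lift (\ref{eq-ac}) is precisely what makes the weight of $\sigma^{m}$ come out to $m$ rather than a translate, so the check does go through). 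The support-based argument is shorter and is what the author intends; your weight computation buys an independent verification that does not rely on the external reference. Both are valid, and your identification of the two routes is accurate.
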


\begin{remark}
When $k=n$, the polarization $\shP_{n}$ is a real polarization and on an open dense subset of $M$, it coincides with $\shP_{\RR}= \ker d\mu$ studied in \cite{BFMN}. Additionally, $\shH_{n} \cong \shH_{\RR}$.
\end{remark}

\subsection{Degenerating K\"ahler polarizations to mixed polarizations $\shP_{k}$}\label{family}
From the perspective of physics, the quantum space should be independent of the choice of polarizations. 
In the previous section, under the assumption $(*)$, there exists singular mixed polarizations $\shP_{k}$ on $X$ as described in \cite{LW1}. Baier, Florentino, Mour\~ao, and Nunes \cite{BFMN} discovered a one-parameter family of K\"ahler polarizations that degenerates to the singular real polarization $\ker d\mu_P$, achieved by identifying a one-parameter family of symplectic potentials.  Motivated by \cite{BFMN}, to further explore the relationship between the K\"ahler polarization $\mathcal{P}_J$ and the mixed polarization $\mathcal{P}_k$, we seek to determine if there exists a one-parameter family of symplectic potentials such that the corresponding K\"ahler polarizations $\mathcal{P}_{k,t}$ degenerate to the singular mixed polarization $\mathcal{P}_k$ for $1 \leq k < n$. In this section, we construct a one-parameter family of K\"ahler polarizations $\mathcal{P}_{k,t}$, using a family of symplectic potentials $g_{k,t}$ (see equation \ref{eq-po-1}), inspired by constructions from \cite{BFMN} and \cite{HK}. We then demonstrate (see Theorem \ref{thm4-3}) that these K\"ahler polarizations $\mathcal{P}_{k,t}$ indeed degenerate to the mixed polarization $\mathcal{P}_k$.

According to the discussion in the last section, for any polarization $\shP_{k}$, there is a one-parameter family of K\"ahler polarizations, such that $$\lim_{t\rightarrow \infty}\shP_{k,t}=\shP_{k}.$$

Let $(X, \omega, J)$ be the toric with moment map $ \mu_{P}: X \twoheadrightarrow P$, where $P$ is Delzant polytope defined by $l_{j} \ge 0$ as (2.1), $j=1,\cdots,d$. Recall that the canonical complex structure $J$ can be written by a symplectic potential $g_{0}: P \rightarrow \RR$ as described in \cite{Ab1}\cite{Ab2}, and on $\mathring{X}$ under the symplectic coordinates $(x,\theta) \in \mathring{P} \times \RR^{n}/\ZZ^{n}$, the canonical complex structure and K\"ahler metric $\gamma= \omega(\cdot, J\cdot)$ are given by:

$$J = 
\begin{pmatrix}
0 & -G_{0}^{-1} \\
G_{0} & 0
\end{pmatrix}
; \gamma=\begin{pmatrix}
G_{0}& 0 \\
 0& G^{-1}_{0}
\end{pmatrix}
$$ 
where $G_{0} = \mathrm{Hess}g_{0}$ is the Hessian of $g_{0}$ such that $G_{0}(x)$ is positive definite on $\mathring{P}$ and satisfies the regularity conditions:
$$\det G_{0}(x)=[ \beta(x) \prod_{j=1}^{d}l_{j}(x)]^{-1},$$
for $\beta$ smooth and strictly positive on $P$.

Taking any $k$-dimensional subtorus $T^{k}$ of $T^{n}$, let $i_k^{*}: (\fot^{n})^{*} \rightarrow (\fot^{k})^{*}$ be the dual map of the inclusion of the Lie algebra $i_{k} : \fot^{k} \rightarrow \fot^{n}$. 
Under the assumption $(*)$, it can be seen  that:
 $\mu_{k} = i_{k}^{*} \circ \mu_{P}.$ Let $\varphi_{k}: \Delta_{k} \rightarrow \RR$ be a strictly convex function. We consider a one-parameter family of symplectic potentials 
 \begin{equation}\label{eq-po-1} g_{k,t} = g_{0}+ t(\varphi_{k}\circ i_{k}^{*}): P \rightarrow \RR. \end{equation}
Then the family of K\"ahler potentials $g_{k,t}$ determines a one-parameter family of toric complex structures denoted by $J_{k,t}$. 
Let $\shP_{k,t}$ be the K\"ahler polarizations associated to the complex structures $J_{k,t}$. We show that the K\"ahler polarizations $\shP_{k,t}$ converge to mixed polarization $\shP_{k}$ in the following theorem.
 
\begin{theorem}\label{thm4-3}
Under the assumption $(*)$, 
we have $$\lim_{t \rightarrow \infty} \shP_{k,t} =  \shP_{k},$$
 where the limit is taken in the positive Lagrangian Grassmannian of the complexified tangent space at each point in $X$.  
\end{theorem}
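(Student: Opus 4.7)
The plan is to work in action--angle coordinates on the open dense orbit $\mathring X$, express $\shP_{k,t}=T^{0,1}_{J_{k,t}}X$ as the span of explicit generators, pass to the limit generator by generator after an appropriate rescaling, and then propagate the result to the toric boundary by $T^n$-equivariance. First I would choose a $\ZZ$-basis $p_1,\dots,p_n$ of $(\fot^n)^*_\ZZ$ adapted to $i_k$, so that $i_k^* p_j$ is a basis of $(\fot^k)^*$ for $j\le k$ and vanishes for $j>k$. In these coordinates $\mu_k(x,\theta)=(x_1,\dots,x_k)$, and on $\mathring X$ one has $\shD^k_\CC=\mathrm{span}_\CC\{\partial_{x_{k+1}},\dots,\partial_{x_n},\partial_{\theta_1},\dots,\partial_{\theta_n}\}$ and $\shI^k_\CC=\mathrm{span}_\CC\{\partial_{\theta_1},\dots,\partial_{\theta_k}\}$. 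Moreover $\varphi_k\circ i_k^*$ depends only on $(x_1,\dots,x_k)$, so
$$G_{k,t} \;:=\; \mathrm{Hess}(g_{k,t}) \;=\; G_0 + tH, \qquad H=\begin{pmatrix}H_k & 0 \\ 0 & 0\end{pmatrix},\quad H_k=\mathrm{Hess}(\varphi_k)>0.$$

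Next I would use the canonical block form of $J_{k,t}$ recalled in Subsection \ref{family} to write $\shP_{k,t}|_{\mathring X}$ as the $\CC$-span of
$$u_j^t \;:=\; \frac{\partial}{\partial x_j}\;+\;i\sum_{l=1}^n (G_{k,t})_{jl}\,\frac{\partial}{\partial \theta_l},\qquad j=1,\dots,n.$$
Since a point of the Lagrangian Grassmannian records only the span, each generator may be rescaled independently. For $j>k$ one has $(tH)_{jl}=0$ for every $l$, so $u_j^t$ is independent of $t$ and equals $u_j^0$; it is a $(0,1)$-vector for the canonical complex structure $J$, and because $\partial_{x_j}\in\ker d\mu_k$ for $j>k$ it lies in $\shD^k_\CC$, hence in $\shP_J\cap\shD^k_\CC$. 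For $j\le k$, rescaling by $1/t$ yields
$$\frac{1}{t}u_j^t \;=\; \frac{1}{t}\partial_{x_j}\;+\;i\sum_{l=1}^n\!\Bigl(\tfrac{1}{t}(G_0)_{jl}+H_{jl}\Bigr)\partial_{\theta_l}\;\xrightarrow[t\to\infty]{}\;i\sum_{l=1}^k (H_k)_{jl}\,\partial_{\theta_l}\;\in\;\shI^k_\CC,$$
and since $H_k$ is invertible these $k$ limits are linearly independent.

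Together the $n$ limit generators are linearly independent in $T_\CC X$ and lie in $(\shP_J\cap\shD^k_\CC)\oplus\shI^k_\CC=\shP_k$; a dimension count ($n-k$ generators in the first summand, $k$ in the second) forces the limit span to equal $\shP_k$. Because the rescaled generators converge in $T_\CC X$ and remain linearly independent, the spans themselves converge in the complex Grassmannian $\operatorname{Gr}(n,T_\CC X)$, and the limit lies in the closure of the positive Lagrangian Grassmannian (each $\shP_{k,t}$ is a positive K\"ahler polarization, and the limit is an $n$-dimensional isotropic complex subspace). To extend from $\mathring X$ to all of $X$ I would combine $T^n$-equivariance of both $t\mapsto\shP_{k,t}$ and of $\shP_k$ with the standard product/slice model around each toric stratum coming from $\partial P$: in each local factor the same $(G_0+tH)$-analysis applies to a lower-dimensional polytope, while the transverse directions are untouched by $tH$.

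The main obstacle I expect is the toric boundary, where $G_0$ is singular and the action--angle coordinates break down; the cleanest resolution is to perform the analogous computation in the complex charts of Subsection \ref{comp-view} on each affine piece, or to invoke continuity of $t\mapsto\shP_{k,t}$ in the Lagrangian Grassmannian bundle together with density of $\mathring X$. A secondary bookkeeping point is to be explicit that the rescaling $u_j^t\mapsto t^{-1}u_j^t$ for $j\le k$ is harmless because Grassmannian points are subspaces --- this must be said to preempt the (false) expectation that $u_j^t$ itself should converge in $T_\CC X$.
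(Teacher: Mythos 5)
Your proposal is correct and follows essentially the same route as the paper: adapted action--angle coordinates on $\mathring{X}$, the family of Hessians $G_{k,t}=G_{0}+tH$ with $H$ supported in the first $k\times k$ block, a generator-by-generator limit yielding $\shI^{k}_{\CC}$ from the first $k$ directions and $\shP_{J}\cap\shD^{k}_{\CC}$ from the last $n-k$, and an appeal to the boundary strata argument of \cite[Lemma 4.2]{BFMN}. The only difference is cosmetic but in your favor: you use the Hessian-transformed frame $u^{t}_{j}=\partial_{x_{j}}+i\sum_{l}(G_{k,t})_{jl}\partial_{\theta_{l}}$ (so that $u^{t}_{j}=u^{0}_{j}$ exactly for $j>k$ and the degenerating generators are handled by the harmless rescaling $t^{-1}u^{t}_{j}$), whereas the paper works with the $\partial/\partial\tilde{w}^{j}_{k,t}$ frame and must instead track the decay of the rows $j\le k$ of $G_{k,t}^{-1}$.
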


\begin{proof} 
 Let $i_{k}^{*}: (\fot^{n})^{*} \rightarrow (\fot^{k})^{*}$ be the dual map of the inclusion $i_{k}: \fot^{k} \rightarrow \fot^{n}$. Consider a basis of $(t^{n})^{*}_{\ZZ}$ denoted by $\tilde{p}_{1}, \cdots, \tilde{p}_{n}$, where $i^{*} (\tilde{p}_{1}), \cdots, i^{*}(\tilde{p}_{k})$ form a $\ZZ$ basis of $(t^{k})^{*}_{\ZZ}$ and $ i^{*}(\tilde{p}_{j}) = 0$ for $j=k+1, \cdots,n$. The basis $\tilde{p}_{1}, \cdots, \tilde{p}_{n}$ induces the complex coordinate $\tilde{w} =(\tilde{w}_{1},\cdots,\tilde{w}_{n})$ and symplectic coordinate $(\tilde{x}, \tilde{\theta})$ on the open dense subset $\mathring{X}$ of $X$. Moreover, by Theorem \ref{com-sym} (or \cite[Theorem 5.3]{HK}), the complex coordinate and symplectic coordinate are related by:
 \begin{equation} \label{eq-re} \tilde{w}_{k,t}^{i} = e^{2\pi (\frac{\partial g_{k,t}}{\partial{\tilde{x}_{i}} } + \sqrt{-1} \tilde{\theta}_{i})} ,   i=1, \cdots, n,\end{equation}
where $g_{k, t}$ is given by $g_{k,t} = g_{0}+t(   \varphi_{k} \circ i_{k}^{*} )$. This implies 

\begin{equation}
\frac{\partial g_{k,t}}{\partial \tilde{x}_{j}} =\frac{\partial g_{0}}{\partial \tilde{x}_{j}}, ~\text{and}~ \tilde{w}_{k,t}^{j}=\tilde{w}_{j},  \forall j= k+1,\cdots, n.  
\end{equation}

We therefore obtain that:
\begin{align*}
\shP_{k,t} &= \mathrm{span}_{\CC} \{\frac{\partial }{\partial \tilde{w}^{i}_{k,t}} , i=1,\cdots, n\} \\
        &= \mathrm{span}_{\CC}\{\frac{\partial }{\partial \tilde{w}^{i}_{k,t}}, i=1,\cdots, k\} \oplus \mathrm{span}_{\CC}\{  \frac{\partial }{\partial \tilde{w}^{j}_{k,t}}, j= k+1,\cdots,n\}\\
        &= \mathrm{span}_{\CC}\{\frac{\partial }{\partial \tilde{w}^{j}_{k,t}}, i=1,\cdots, k\} \oplus \mathrm{span}_{\CC}\{  \frac{\partial }{\partial \tilde{w}^{j}_{k,0}}, j= k+1,\cdots,n\}\\
         &= \mathrm{span}_{\CC}\{\frac{\partial }{\partial \tilde{w}^{j}_{k,t}}, i=1,\cdots, k\} \oplus \mathrm{span}_{\CC}\{  \frac{\partial }{\partial \tilde{w}_{j}}, j= k+1,\cdots,n\}.
\end{align*}

Recall that $\shD_{\CC}^{k} = \ker d\mu_{k} \otimes \CC$ is the complexification of $\ker d\mu_{k} \subset TX$, where $\mu_{k}: X \twoheadrightarrow \Delta_{k}$ is the moment map associated to the Hamiltonian $T^{k}$-action on $X$. Then, by direct computation, the distributions $\shD_{\CC}^{k}|_{\mathring{X}}$ are given by:
\begin{equation}\label{eq4-1}
\shD^{k}_{\CC} |_{\mathring{X}}= \mathrm{span}_{\CC} \{ \frac{\partial }{\partial \tilde{\theta}_{i}}, i=1,\cdots,n \} \oplus \mathrm{span}_{\CC} \{ \frac{\partial}{ \partial \tilde{x}_{j}}, j=k+1, \cdots,n\}.
\end{equation}\label{eq4-2}
Hence, on the open dense subset $\mathring{X}$, we have:
\begin{equation}\shD^{k}_{\CC} \cap \shP_{k,t} = \mathrm{span}_{\CC}\{  \frac{\partial }{\partial \tilde{w}^{j}_{k,0}}, j= k+1,\cdots,n\}= \shD^{k}_{\CC} \cap \shP_{J}.\end{equation}
 It is easy to see that $\shI^{k}_{\CC}|_{\mathring{X}} = \mathrm{span}_{\CC} \{ \frac{\partial }{\partial \tilde{\theta}_{i}}, i=1,\cdots,n \}$.
In combination with equations (\ref{eq4-1}) and (\ref{eq4-2}), it follows that
\begin{equation}
\shP_{k}|_{\mathring{X}}=(\shD^{k}_{\CC} \cap \shP_{J})\oplus \shI_{\CC}^{k}=\mathrm{span}_{\CC}\{  \frac{\partial }{\partial \tilde{w}_{j}}, j= k+1,\cdots,n\} \oplus \mathrm{span}_{\CC}\{\frac{\partial }{\partial \tilde{\theta}^{j}}, j=1,\cdots, k\}.  \end{equation}
 On open dense subset $\mathring{X}$, according to equation (\ref{eq-re}), it can be seen that
 \begin{equation}
 \frac{\partial }{\partial \tilde{w}^{j}_{k,t}}=\frac{\partial }{\partial {y}^{j}_{k,t}}-i\frac{\partial }{\partial \tilde{\theta}^{j}}= (G^{-1}_{k,t})_{jl}\frac{\partial}{\partial \tilde{x}_{l}} -i\frac{\partial }{\partial \tilde{\theta}^{j}},
 \end{equation}
 where  ${y}^{j}_{k,t}=\frac{\partial g_{k,t}}{\partial \tilde{x}_{j}}$ and $G_{k,t}^{-1}= (\mathrm{Hess}g_{k,t})^{-1}$. By the convexity of $\varphi_{k}$, we obtain: $(G^{-1}_{k,t})_{jl} \rightarrow 0$ as $t \rightarrow \infty$, for $j=1,\cdots, k$. 
This implies that 
 \begin{align*}
 \lim_{t\rightarrow \infty} \shP_{k,t}|_{\mathring{X}}   
       &=  \lim_{t\rightarrow \infty}\mathrm{span}_{\CC}\{\frac{\partial }{\partial \tilde{w}^{j}_{k,t}}, i=1,\cdots, k\} \oplus \mathrm{span}_{\CC}\{  \frac{\partial }{\partial \tilde{w}_{j}}, j= k+1,\cdots,n\}\\
                 &=  \lim_{t\rightarrow \infty}\mathrm{span}_{\CC}\{(G^{-1}_{k,t})_{jl}\frac{\partial}{\partial \tilde{x}_{l}} -i\frac{\partial }{\partial \tilde{\theta}^{j}}, j=1,\cdots, k\} \oplus \mathrm{span}_{\CC}\{  \frac{\partial }{\partial \tilde{w}_{j}}, j= k+1,\cdots,n\}\\
                 &=\mathrm{span}_{\CC}\{\frac{\partial }{\partial \tilde{\theta}^{j}}, j=1,\cdots, k\} \oplus \mathrm{span}_{\CC}\{  \frac{\partial }{\partial \tilde{w}_{j}}, j= k+1,\cdots,n\}=\shP_{k}|_{\mathring{X}}
 \end{align*}
Finally, according to \cite[Lemma 4.2]{BFMN}, we have $\shD^{k}_{\CC}  \cap \shP_{k,t}  = \shD^{k}_{\CC} \cap \shP_{J}$ and $ \lim_{t\rightarrow \infty} \shP_{k,t}=\shP_{k}$ on the point outside the open orbit $\mathring{X}$. To summarize the above discussion, we have:
 $$ \lim_{t\rightarrow \infty} \shP_{n,t}=\shP_{n}.$$
 \end{proof}
 
\begin{remark}When $k=n$, the family of K\"ahler polarizations $\shP_{n,t}$ coincide with the family of K\"ahler polarizations $\shP_{\CC}^{s}$ studied in \cite[Theorem 1.2]{BFMN}. But our result stated that not only $\Gamma(X,  \lim_{t\rightarrow \infty} \shP_{n,t})=\Gamma(X, \shP_{n})$ but as distribution $ \lim_{t\rightarrow \infty} \shP_{k,t}=\shP_{k}.$
\end{remark}\label{re4-3}
\begin{remark} The symplectic potential $g_{k,t}$ is defined by:
$$g_{k,t} = g_{0}+ t(\varphi_{k}\circ i_{k}^{*}): P \rightarrow \RR.$$
It follows $g_{k,0}=g_{0}$ for any $1\le k \le n$. Therefore the one-parameter family of K\"ahler polarizations $\shP_{k,t}$ relate K\"ahler polarization $\shP_{J}=\shP_{k,0}$ to the polarization $\shP_{k}$.  
\end{remark}

\subsection{Compatibility with imaginary-time flow}
 In the previous subsection, we identified a family of K\"{a}hler polarizations that connects 
K\"{a}hler polarization $\shP_{J}$ with mixed polarization $\shP_{k}$ through symplectic 
potentials. Particularly, according to Abreu's work in \cite{Ab2}, the complex structure of 
this family exhibits a $T^n$-invariant toric complex structure.

Using the imaginary-time flow method, as employed by Mour\~{a}o and Nunes in \cite{MN}, 
Leung and Wang construct a one-parameter family of K\"{a}hler polarizations in \cite{LW1}. 
These polarizations converge to a mixed polarization on a K\"{a}hler manifold with $T$-symmetry. 
Specifically, by Theorem 3.19 in \cite{LW1}, the imaginary time flow method identifies a 
family of complex structures on the toric variety $(X, \omega, J)$, such that the corresponding 
K\"{a}hler polarizations converge to the mixed polarization $\shP_{k}$. In this subsection, we will demonstrate the conditions under which the K\"{a}hler polarizations of the two families coincide (see Theorem \ref{thm3-0-4}).

Under assumption $(*)$, according to Theorem \ref{thm4-3}, we know that there is a one-parameter family of K\"ahler polarizations \{$\shP_{k,t}$\} determined by $g_{k,t} = g_0 + \varphi_{k}\circ i^{*}_{k}$ converging to $\shP_{k}$, where $i_{k}^{*}: (\fot^{n})^{*} \rightarrow (\fot^{k})^{*}$ is the dual map of the inclusion $i_{k}: \fot^{k} \rightarrow \fot^{n}$, and $\varphi_{k}:\Delta_{k} \rightarrow \RR$ is a strictly convex function.
Let $\hat{\varphi}_{k}: \Delta_{k} \rightarrow \RR$ be another strictly convex function. Let $X_{\hat{\varphi}_{k}}$ be the Hamiltonian vector field on $X$ associated to the function $\hat{\varphi}_{k}\circ i_{k}^{*} \circ \mu_{P}$. 
 According to \cite[Theorem 3.19, 3.20, and 3.21]{LW1}, we are able to construct a one-parameter family of K\"{a}hler polarizations $\shP_{k,t}'$ using imaginary-time flow $e^{-itX_{\hat{\varphi}_{k}}}$. 
 
We will demonstrate that when $\varphi_{k}=\hat{\varphi}_{k}$ and $\shP_{k,0}=\shP_{k,0}'$, these two one-parameter families of K\"{a}hler polarizations coincide. This equivalence is established by applying \cite[Theorem 3.2]{LW3} and Legendre tranformation described in \cite{Ab2} to our current setting.

\begin{theorem}\label{thm3-0-4}
Let $\{\mathcal{P}_{k,t}\}$ be a one-parameter family of K\"{a}hler polarizations given by the symplectic potentials $g_{k,t} = g_0 + \varphi_k \circ i_k^*$, and let $\{\mathcal{P}'_{k,t}\}$ be another one-parameter family of K\"{a}hler polarizations derived using the imaginary-time flow $e^{-itX_{\hat{\varphi}_k}}$. If $\shP_{k,0}=\shP_{k,0}'$ and $\varphi_{k}=\hat{\varphi}_{k}$, then
$$\mathcal{P}_{k,t} = \mathcal{P}'_{k,t}, ~\forall t\ge 0.$$
\end{theorem}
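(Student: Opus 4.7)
The plan is to reduce the equality of the two families of K\"ahler polarizations to an equality of their associated symplectic potentials: by Theorem \ref{com-sym}, a $T^n$-equivariant K\"ahler polarization on $(X,\omega)$ homotopic to the canonical one is uniquely determined by its symplectic potential. Since $\mathcal{P}_{k,t}$ is by construction associated to $g_{k,t} = g_0 + t(\varphi_k \circ i_k^*)$, it suffices to produce the symplectic potential $g'_{k,t}$ of $\mathcal{P}'_{k,t}$ and check that $g'_{k,t} = g_{k,t}$.

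First I would describe $\mathcal{P}'_{k,t}$ via its K\"ahler potential $\rho_t$ on $\mathring{X}$. Since $\mathcal{P}'_{k,0} = \mathcal{P}_{k,0}$ has K\"ahler potential induced by $g_0$, and the imaginary-time flow is generated by the $T^n$-invariant Hamiltonian $H = \hat\varphi_k \circ i_k^* \circ \mu_P$, the formula of Leung--Wang \cite{LW3} expresses $\rho_t$ in terms of $\rho_0$ and $H$. Next I would apply Burns--Guillemin (Theorem \ref{thm2-0}): in the log-coordinates $s_i = \log|z_i|^2$, the $T^n$-invariance of $\rho_t$ gives a function $f_t(s) := \rho_t(e^s)$ satisfying the moment map identity $\mu_i = \partial_{s_i} f_t$. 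Passing through the Legendre transformation of Abreu \cite{Ab2}, namely $g'_{k,t}(p) = \sum_i p_i\, s_i(p) - f_t(s(p))$ with $s(p)$ the inverse of $p = \nabla_s f_t$, one recovers the symplectic potential of $\mathcal{P}'_{k,t}$.

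The key step is then to show that the perturbation $tH$ added to $\rho_0$ by the imaginary-time flow translates, after Legendre transformation, into the addition of $t(\hat\varphi_k \circ i_k^*)$ at the level of the symplectic potential. This uses the fact that $H$ depends on the complex coordinates only through the moment map $\mu_P$, so Burns--Guillemin identifies the contribution of $H$ to $\partial_{s_i} f_t$ as a function of the moment-polytope variables alone, and the Legendre transform sends such composite functions to their pullbacks by $i_k^*$ on $P$. Since $\hat\varphi_k = \varphi_k$ by hypothesis, this yields $g'_{k,t} = g_0 + t(\varphi_k \circ i_k^*) = g_{k,t}$. By Theorem \ref{com-sym} we conclude $\mathcal{P}'_{k,t} = \mathcal{P}_{k,t}$ on the open dense toric orbit $\mathring{X}$, and continuity of both families in the positive Lagrangian Grassmannian of $T_\CC X$, together with the continuity properties of symplectic potentials at the faces of $P$ built into Definition \ref{def-po}, extends the equality to all of $X$.

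The main obstacle is implementing this third step rigorously: one must verify that the nonlinear Legendre transform, applied to $\rho_t = \rho_0 + (\text{term involving } tH)$, produces \emph{exactly} the clean linear perturbation $t(\varphi_k \circ i_k^*)$ of $g_0$ with no cross-terms between $g_0$ and the perturbation. This is plausible because $H$ factors through the moment map and the Legendre transform is an involution that interchanges K\"ahler and symplectic potentials, but the calculation requires a careful change of variables between the log-coordinates $s_i$ and the moment polytope coordinates $p$, and a tracking of how the critical-point condition $p = \nabla_s f_t$ shifts under the perturbation. The corollary on the equality of complex structures $J_{k,t} = J'_{k,t}$ then follows immediately from Theorem \ref{com-sym} once $\mathcal{P}_{k,t} = \mathcal{P}'_{k,t}$ is established.
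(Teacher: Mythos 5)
Your proposal follows essentially the same route as the paper: reduce to equality of symplectic potentials via Abreu's classification of toric complex structures, compute the K\"ahler potential of $\mathcal{P}'_{k,t}$ from the Leung--Wang imaginary-time-flow formula of \cite{LW3}, identify the local potential $\beta$ via Burns--Guillemin, and pass back through the Legendre transform. The ``main obstacle'' you flag (no cross-terms under the nonlinear Legendre transform) resolves exactly as you suspect: working entirely in the action coordinates $x$ on $\mathring{P}$, the formula $2h'_{k,t}=2h'_{k,0}-2t\varphi_k+2t\,\beta(X_{\varphi_k})$ together with $\beta=\sum_j x_j\,d\theta_j$ and $X_{\varphi_k}=\sum_{j\le k}\tfrac{\partial\varphi_k}{\partial x_j}\tfrac{\partial}{\partial\theta_j}$ gives $h'_{k,t}=h'_{k,0}-t\varphi_k+t\sum_{j\le k}x_j\tfrac{\partial\varphi_k}{\partial x_j}$, which is precisely the Legendre transform of $g'_{k,0}+t(\varphi_k\circ i_k^*)$ because the assignment $g\mapsto -g+\sum_i x_i\,\partial g/\partial x_i$ is linear in $g$ when all potentials are regarded as functions of $x$.
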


\begin{proof} 
Since $\varphi_{k}$ is a strictly convex function on $\Delta_{k}$, by \cite[Proposition 6.5]{HK}, there is a one-parameter family of toric complex structure \{$J_{k,t}$\} defined by $g_{k,t} = g_{k,0} + \varphi_k \circ i_k^*$.
In fact, \{$\shP_{k,t}$\} is the K\"ahler polarizations associated to $J_{k,t}$. Additionally, \cite[Theorem 3.19]{LW1} shows the existence of a one-parameter family of complex structure \{$J_{k,t}'$\} determined by the imaginary-time flow $e^{-itX_{\varphi_k}}$.
According to \cite[Theorem 3.20]{LW1}, $(M, \omega, J_{k,t})'$ is a K\"ahler manifold and $\{\mathcal{P}'_{k,t}\}$ is the one-parameter family of K\"ahler polarization associated to \{$J_{k,t}'$\}. Therefore, to prove $\mathcal{P}_{k,t} = \mathcal{P}'_{k,t},$ it is sufficient to demonstrate $J_{k,t}=J_{k,t}',  ~\forall t\ge 0.$ By assumption $J_{k,0}'=J_{k,0}$, it follows that $J_{k,0}'$ is a toric complex structure. It is easy to verify that $J_{k,t}'$ is also toric complex structure. Let $g_{k,t}'$ be the potential function associated to $J_{k,t}'$.  Based on Abreu's \cite[Theorem 2.8]{Ab2}, in order to prove $J_{k,t}=J_{k,t}' $, it's enough to demonstrate $g_{k,t}'=g_{k,t}, ~\forall t\ge 0$. 
Let $h_{k,t}'$ be the K\"ahler potential with respect to $J_{k,t}'$ on the open dense subset $\mathring{X}$. By \cite[Theorem 5.3]{HK} (or Legendre transformation in  \cite{Ab2})
), $h_{k,0}'$ is given by 
\begin{equation}\label{eq3-1-1}
h_{k,0}' = -g_{k,0}'(x)+\sum x \cdot y_{k,0}.
\end{equation}
Here $x$ is the symplectic coordinate and $y_{k,0}$ is the complex coordinate on $\mathring{X}$ described as before.
By \cite[Theorem 3.2 ]{LW3}, 
\begin{equation}\label{eq3-1-2}
2h_{k,t}'=2h_{k,0}' -2t\varphi_{k}+2t \beta(X_{\varphi_{k}}),
\end{equation}
 where $\beta$ is the real local potential for $\omega$ defined by $\beta=\mathrm{Re}(i\bar{\partial}_{k,0}h_{k,0})$ and $\bar{\partial}_{k,0}$ being the $\bar{\partial}$-operator with respect to the complex structure $J_{k,0}$.
By Burns-Guillemin's results in \cite{BG} (or see Theorem \ref{thm2-0}), we have:
\begin{equation}\label{eq3-1-3}
\beta=\mathrm{Re}(i\bar{\partial}_{k,0}h_{k,0})=\mathrm{Re}(i\bar{\partial}_{k,0}h_{k,0})=\sum_{j}^{n}x_{j}d\theta_{j},
\end{equation}
where $(x,\theta)$ is the symplectic coordinate on $\mathring{X}$. Under the inclusion $i_{k}: \fot_{k}\rightarrow \fot_{n}$, by direct computation, we have:
\begin{equation}\label{eq3-1-4}
X_{\varphi_{k}}=\sum_{j=1}^{k}\frac{\partial \varphi_{k}}{\partial x_{j}}\frac{\partial}{\partial \theta_{j}}.
\end{equation}
By combining equations (\ref{eq3-1-1}), (\ref{eq3-1-2}), (\ref{eq3-1-3}), and (\ref{eq3-1-4}), we obtain:
\begin{equation}\label{eq3-1-5}
2h_{k,t}'=2(-g_{k,0}'(x)+\sum x \cdot y_{k,0}) -2t\varphi_{k}+2t\sum_{j=1}^{k}\frac{\partial \varphi_{k}}{\partial x_{j}}x_{j}.
\end{equation}
Then, by Legendre transformation (see \cite{Ab2}), we have:
\begin{equation}
g_{k,t}'= g_{k,0}' + t \varphi_{k} \circ i_{k}^{*}.
\end{equation}
Based on the assumption $J_{k,0}=J_{k,0}'$, it follows that $g_{k,0}=g_{k,0}'$. Consequently, we can conclude that $g_{k,t}=g_{k,t}'$ for all $t \ge 0$.
\end{proof}

\begin{corollary}\label{com-family}
Let $\{J_{k,t}\}$ be the one-parameter family of toric complex structures given by the potential $g_{k,t} = g_0 + \varphi_k \circ i_k^*$, and let $\{J'_{k,t}\}$ be another one-parameter family of complex structures derived using imaginary-time flow $e^{-itX_{\hat{\varphi}_k}}$. If $\shP_{k,0}=\shP_{k,0}'$ and $\varphi_{k}=\hat{\varphi}_{k}$, then $$J_{k,t} = J'_{k,t}, ~\forall t\ge 0.$$
\end{corollary}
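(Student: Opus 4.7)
The plan is to deduce this corollary directly from Theorem \ref{thm3-0-4}. The key observation is that for any $T^n$-invariant toric K\"ahler structure on $(X,\omega)$, the associated K\"ahler polarization and the complex structure determine one another: the polarization $\shP_{J}$ is by definition the $(-i)$-eigenbundle $T^{0,1}X$ of $J$ acting on $TX\otimes\CC$, and conversely $J$ is uniquely recovered from $\shP_{J}$ by the splitting $TX\otimes \CC = \shP_{J}\oplus \overline{\shP_{J}}$ together with the prescription that $J$ acts as $+i$ on $\overline{\shP_{J}}$ and as $-i$ on $\shP_{J}$.

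First I would invoke the proof of Theorem \ref{thm3-0-4} to record that under the hypotheses $\shP_{k,0}=\shP_{k,0}'$ and $\varphi_{k}=\hat{\varphi}_{k}$, we have the equality of K\"ahler polarizations $\shP_{k,t}=\shP_{k,t}'$ for all $t\ge 0$. Next, since $J_{k,t}$ is by construction a toric complex structure compatible with $\omega$, and since $\{J'_{k,t}\}$ is shown in \cite[Theorem 3.20]{LW1} to be a family of complex structures compatible with $\omega$, both $J_{k,t}$ and $J'_{k,t}$ are $\omega$-compatible. Applying the reconstruction of $J$ from its $(0,1)$-eigenbundle to the common polarization $\shP_{k,t}=\shP_{k,t}'$, we conclude $J_{k,t}=J'_{k,t}$ for every $t\ge 0$.

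Alternatively, and this is essentially what is already done inside the proof of Theorem \ref{thm3-0-4}, one can argue at the level of symplectic potentials. The proof shows via the Burns--Guillemin formula and the Legendre transform (\cite{BG}, \cite{Ab2}) that the symplectic potential $g'_{k,t}$ associated with $J'_{k,t}$ equals $g_{k,0}'+t(\varphi_{k}\circ i_{k}^{*})$, which under the assumption $J_{k,0}=J'_{k,0}$ (equivalently $g_{k,0}=g'_{k,0}$) matches $g_{k,t}$. Then Abreu's characterization \cite[Theorem 2.8]{Ab2} of toric complex structures by symplectic potentials yields $J_{k,t}=J'_{k,t}$.

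I do not anticipate a significant obstacle, since the only content beyond Theorem \ref{thm3-0-4} is the elementary linear-algebraic fact that a complex structure on a symplectic manifold is recovered from its $(0,1)$-bundle; the hypothesis $\shP_{k,0}=\shP_{k,0}'$ guarantees that the two families of complex structures have a common initial value, ensuring that the induced complex structures (not merely their polarizations) coincide throughout the deformation.
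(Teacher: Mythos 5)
Your proposal is correct, and your second (``alternative'') route is in fact exactly what the paper does: the paper gives no separate proof of this corollary because the proof of Theorem \ref{thm3-0-4} already establishes $g'_{k,t}=g'_{k,0}+t(\varphi_k\circ i_k^*)=g_{k,t}$ and then invokes Abreu's correspondence between symplectic potentials and toric complex structures to conclude $J_{k,t}=J'_{k,t}$; the equality of polarizations in the theorem is \emph{deduced from} the equality of complex structures, so the corollary is really the intermediate step of that argument. Your primary route runs the logic in the opposite direction --- take the theorem's conclusion $\shP_{k,t}=\shP'_{k,t}$ as a black box and recover $J_{k,t}=J'_{k,t}$ from the eigenbundle decomposition $TX\otimes\CC=\shP\oplus\overline{\shP}$ --- which is a perfectly valid and cleanly self-contained alternative; its only cost is that it needs the (easy but not entirely free) observation that $J'_{k,t}$ is an honest almost complex structure determined by its $(0,1)$-bundle, which you correctly source from \cite[Theorem 3.20]{LW1}. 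Either way the hypothesis $\shP_{k,0}=\shP'_{k,0}$ is used only to pin down the common initial value $g_{k,0}=g'_{k,0}$, as you note. No gaps.
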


\begin{remark} This implies that $J'_{k,t}$ given by imaginary-time flow is also toric complex structures. Additionally,
combining Theorem \ref{thm3-0-4} and \cite[Theorem 3.21]{LW1} provides an alternative proof of Theorem \ref{thm4-3}.
Moreover, for any $t \ge 0$, $(X,\omega, J_{k,t})$ is a K\"ahler manifold. Moreover, the path of K\"ahler metrics $h_{k,t}=\omega(-,J_{k,t}-)$ is a complete geodesic ray in the space of K\"ahler metrics of $X$.
\end{remark}

\subsection{Large limit of quantum spaces}\label{quant}
$(*):$ 
Let $(X, \omega, J)$ be a 2n-dimensional toric variety determined by a Delzant polytope $P$ with moment map $\mu_{P}: X \twoheadrightarrow P \subset (\fot^{n})^{*}$.
 We consider the Hamiltonian $k$-dimensional subtorus action $\rho_{k}: T^{k} \rightarrow \Diff(X, \omega, J)$  with moment map $\mu_{k}: X \twoheadrightarrow \Delta_{k} \subset (\fot^{k})^{*}$. 
This section is dedicated to exploring the correlation between the quantum space $\shH_{k,0}=\shH_{J}$ and $\shH_{k}$, which are 
associated with the K\"ahler polarization $\shP_{J}$ and mixed polarization $\shP_{k}$, respectively. Consider a strictly convex function $\varphi_{k}: \Delta_{k} \rightarrow \RR$. As discussed in the previous section, we examine a one-parameter family of symplectic potentials $g_{k,t} = g_{0} + t(\varphi_{k} \circ i_{k}^{*} )$. This investigation leads us to Theorem \ref{thm4-3}, where we identify a one-parameter family of K\"ahler polarizations $\shP_{k,t}$, which converge to $\shP_{k}$. 
This convergence establishes a vital link between the initial K\"ahler polarization $\shP_{k,0}$ and the mixed polarization $\shP_{k}$.
Furthermore, for any $m \in P \cap (\fot^{n})^{*}_{\ZZ}$, a holomorphic section $\sigma^{m}$ of $L_{comp}$ is defined by equation (\ref{def-ho-m}). The set $\{\sigma^{m}\}_{m \in P \cap (\fot^{n})^{*}_{\ZZ}}$ forms a basis of $H^{0}(W_{P}, L_{comp})$. According to the equations \ref{eq-ac}, \ref{def-ho-m}, and Theorem \ref{com-sym}, the transformed sections
$\{\sigma^{m}_{k,t}:=(\tilde{\chi}_{g_{k,t}})^*\sigma_{m} \}_{m \in (\fot^{n})^{*}_{\ZZ}}$  form a basis of the space $\shH_{k,t}$ as described by:
\begin{equation}\label{eq-inv}\shH_{k,t}= \mathrm{span}\{\sigma^{m}_{k,t}:=(\tilde{\chi}_{g_{k,t}})^*\sigma_{m} \}_{m \in (\fot^{n})^{*}_{\ZZ}}.\end{equation}

Our goal is to elucidate the relationship between the quantum spaces $\shH_{k,0}$ and $\shH_{k}$ across the one-parameter family of K\"ahler polarizations $\{\shP_{k,t}\}_{t\ge0 }$. Specifically, we aim to  demonstrate that 
``$\lim_{t\rightarrow \infty}\shH_{k,t}=\shH_{k}$" 
in the following sense. 
\begin{theorem} \label{thm5-1} 
Under assumption $(*)$, for any strictly convex function $\varphi_{k}$ in a neighborhood of $\Delta_{k}$ and $m \in P \cap \fot^{*}_{\ZZ}$, consider the family of $L^{1}$-normalized $J_{k,t}$-holomorphic sections $\frac{\sigma_{k,t}^{m}}{\|\sigma_{k,t}^{m}\|_{L_{1}}}$, under the injection $i: \Gamma(X, L) \rightarrow \Gamma_{c}(X,L^{-1})'$.
We observe that:
\begin{equation}
i\left(\frac{\sigma_{k,t}^{m}}{\|\sigma_{t}^{m}\|_{L_{1}}}\right) \longrightarrow \delta_{k}^{m}, \text{as}~ t \rightarrow \infty.
\end{equation}
In the sense that for any $\phi \in \Gamma_{c}(X, L^{-1})'$, we have:
\begin{equation}
\lim_{t \rightarrow \infty} \int_{X} \langle \frac{\sigma_{k,t}^{m}}{\|\sigma_{k,t}^{m}\|_{L_{1}}}, \phi \rangle e^{\omega} = \frac{1}{c_{k}^{m}}\int_{X_{k}^{q}} \langle \sigma_{k,0}^{m}, \phi \rangle |_{\mu_{k}^{-1}(0)} \vol_{k}^{q},
\end{equation}
where $q=i_{k}^{*}(m)$ and
$c_{k}^{m} = \int_{X^{q}_{k}} \|\sigma_{k,0}^{m}|_{X^{q}_{k}}\| \vol_{k}^{q}.$
\end{theorem}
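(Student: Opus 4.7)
The plan is to evaluate both $\int_X\langle\sigma_{k,t}^m,\phi\rangle e^\omega$ and $\|\sigma_{k,t}^m\|_{L^1}$ in symplectic coordinates on $\mathring{X}$, isolate the $t$-dependence into a single exponential factor, pass to the limit using the one-dimensional concentration result of Lemma \ref{conv}, and finally match the resulting expression with the defining formula for $\delta_k^m(\phi)$.

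First, using Theorem \ref{com-sym} together with the explicit formulas for $\sigma^m$ and $\tilde{\chi}_g^*\mathring{s}_\lambda$, I would check that on $\mathring{X}$ the section $\sigma_{k,t}^m=(\tilde{\chi}_{g_{k,t}})^*\sigma^m$ takes the form
\begin{equation*}
\sigma_{k,t}^m(x,\theta)=\exp\bigl(F_0(x)-t f_m^\psi(x)+i\langle m,\theta\rangle\bigr)\,\mathring{\mathbbm{1}},
\end{equation*}
where $\psi:=\varphi_k\circ i_k^*$, the function $F_0(x):=g_0(x)+\sum_i(x_i^m-x_i)\partial g_0/\partial x_i$ is $t$-independent, and $f_m^\psi(x):={}^t(x-m)\,\partial\psi/\partial x-\psi(x)$ is exactly the function appearing in Lemma \ref{conv}. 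The crucial observation is that because $\psi$ factors through $i_k^*$, the function $f_m^\psi$ depends only on $y:=i_k^*(x)\in\Delta_k$ and in fact equals $f_q^{\varphi_k}(y)$ with $q:=i_k^*(m)$; thus all $t$-dependence lives on the base $\Delta_k$ of the affine fibration $i_k^*|_P:P\to\Delta_k$.

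Next, writing $\|\mathring{\mathbbm{1}}\|^2(x)=:e^{-K(x)}$ and $B_m(x):=\int_{T^n}e^{i\langle m,\theta\rangle}\langle\mathring{\mathbbm{1}}(x,\theta),\phi(x,\theta)\rangle\,d\theta$, and splitting $x=(y,z)$ with $z\in P_y:=\{z:(y,z)\in P\}$, a direct Fubini argument (the Liouville measure is $dx\,d\theta=dy\,dz\,d\theta$) yields
\begin{align*}
\int_X\langle\sigma_{k,t}^m,\phi\rangle\,e^\omega&=\int_{\Delta_k}e^{-tf_q^{\varphi_k}(y)}\,\tilde{B}(y)\,dy,\\
\|\sigma_{k,t}^m\|_{L^1}&=(2\pi)^n\int_{\Delta_k}e^{-tf_q^{\varphi_k}(y)}\,\tilde{A}(y)\,dy,
\end{align*}
where $\tilde{B}(y):=\int_{P_y}e^{F_0(y,z)}B_m(y,z)\,dz$ and $\tilde{A}(y):=\int_{P_y}e^{F_0(y,z)-K(y,z)/2}\,dz$ are continuous functions on $\Delta_k$ with $\tilde{A}(q)>0$. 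Lemma \ref{conv}, applied to the strictly convex function $\varphi_k$ at the point $q$, then immediately gives
\begin{equation*}
\lim_{t\to\infty}\frac{\int_X\langle\sigma_{k,t}^m,\phi\rangle\,e^\omega}{\|\sigma_{k,t}^m\|_{L^1}}=\frac{\tilde{B}(q)}{(2\pi)^n\,\tilde{A}(q)}.
\end{equation*}

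Finally I would identify this ratio with $\delta_k^m(\phi)$. On $\mathring{X}$ the level set $X_k^q\cap\mathring{X}$ is parametrized by $\{y=q\}\simeq P_q\times T^n$; the residual $T^k$-action rotates $\theta'=(\theta_1,\dots,\theta_k)$; the reduced symplectic form on $X_{q,k}$ is $\omega_{q,k}=\sum_{i>k}dz_i\wedge d\theta_i$; and choosing the flat connection $\alpha=\sum_{i\le k}d\theta_i\otimes e_i$ on the principal $T^k$-bundle $X_k^q\to X_{q,k}$ produces $\vol_k^q=dz\,d\theta$. This identifies $\tilde{B}(q)$ with $\int_{X_k^q}\langle\sigma_{k,0}^m,\phi\rangle\,\vol_k^q$ and $(2\pi)^n\,\tilde{A}(q)$ with $c_k^m$, completing the match. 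I expect the main technical difficulty to lie in this last step: one has to verify the normalization of $\vol_k^q$ carefully, handle the mild degeneracies of the formulas on $\partial P$, and invoke the Sjamaar--Lerman stratification \cite{SL} to argue that the non-principal strata contribute zero measure to both integrals, so that working on the principal stratum $\check{X}_k^q$ is sufficient.
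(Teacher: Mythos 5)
Your proposal is correct and follows essentially the same route as the paper's proof: both factor $\sigma_{k,t}^m=e^{-t\alpha_m}\sigma_{k,0}^m$ with $\alpha_m$ the Legendre-type function of Lemma \ref{conv} (which descends to $\Delta_k$ since $\varphi_k\circ i_k^*$ factors through $i_k^*$), push both integrals forward along the fibers of $\mu_k$ to densities on $\Delta_k$, apply Lemma \ref{conv} there, and identify the resulting ratio with $\delta_k^m(\phi)$ via the fiber volume $\vol_k^q$. Your treatment is somewhat more explicit than the paper's (notably in the Fubini decomposition and in verifying the normalization of $\vol_k^q$ and the measure-zero contribution of the non-principal strata), but the decomposition and the key lemma are identical.
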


\begin{proof}

 Recall that $i_{k}^{*}: (\fot^{n})^{*} \rightarrow (\fot^{k})^{*}$ is the dual Lie algebra homomorphism of $i_{k}: \fot^{k} \rightarrow \fot^{n}$. We observe that $i_{k}^{*}(P) = \Delta_{k}$ due to $\mu_{k} = i_{k}^{*} \circ \mu_{p}$. Now we take a basis of $(\fot^{n})^{*}_{\ZZ}$ denoted by $\tilde{p}_{1}, \cdots, \tilde{p}_{n}$ such that $i_{k}^{*} (\tilde{p}_{1}), \cdots, i_{k}^{*}(\tilde{p}_{k})$ form a $\ZZ$ basis of $(t^{k})^{*}_{\ZZ}$ and $ i_{k}^{*}(\tilde{p}_{j}) = 0$ for $j=k+1, \cdots,n$. Then $\tilde{p}_{1},\cdots, \tilde{p}_{n} $ induces the coordinate $(\tilde{x}_{1},\cdots,\tilde{x}_{k})$ on $(\fot^{k})^{*}$. Without loss of generality, we assume $i_{k}^{*}(m)=0$, i.e. $q=0$. If we write $m =\sum_{i=1}^n \tilde{m}_i \tilde{p}_i \in (\fot^{n})^*_{\ZZ}$, according to (\ref{def-sigma-m}), Theorem \ref{com-sym}, and Theorem \ref{thm3-0-4}, we have:
\begin{equation}\label{eq4-0-3}
(\sigma_{k,t}^{m})=(\tilde{\chi}_{k,t}^* \sigma^m) = (\tilde{\chi}_{k,t}^* \{ (\prod_{i=1}^n (\tilde{w}_i)^{\tilde{m}_i}) s_{\lambda} \})=e^{-t\alpha_{m}}(\prod_{i=1}^n (\tilde{w}_i)^{\tilde{m}_i}) s_{\lambda},
\end{equation}
where $\alpha_m(\tilde{x}) =\sum_{i=1}^n (\tilde{x}_i -\tilde{m}_i)\frac{\partial (\varphi_{k} \circ i_{k}*)}{\partial \tilde{x}_i}(\tilde{x})-(\varphi_{k} \circ i_{k}^*)(\tilde{x}) .$

 for any test section $\phi \in \Gamma_{c}(X, L^{-1})$, we have:
\begin{align*}
 \int_{X} \langle \frac{\sigma_{k,t}^{m}}{\|\sigma_{k,t}^{m}\|_{L_{1}}}, \phi \rangle e^{\omega} &= \int_{X} \langle \frac{\sigma_{k,t}^{m}}{\|\sigma_{k,t}^{m}\|_{L_{1}}}, \phi \rangle e^{\omega} \\
 &=  \frac{1}{\|\sigma_{k,t}^{m}\|_{L_{1}}} \int_{X} \langle \sigma_{k,t}^{m}, \phi \rangle e^{\omega} \\
 &= \frac{\|e^{-t\alpha_{m}}\|_{L_{1}}}{\|\sigma_{k,t}^{m}\|_{L_{1}}}  \int_{X} \frac{e^{-t\alpha_{m}}}{\|e^{-t\alpha_{m}}\|_{L_{1}}}  \langle \sigma_{k,0}^{m}, \phi \rangle e^{\omega}.
\end{align*}


By equation (\ref{eq4-0-3}), we have:

\begin{equation}\label{eq4-0-4}
 \frac{\|\sigma_{k,t}^{m}\|_{L_{1}}}{\|e^{-t\alpha_{m}}\|_{L_{1}}} = \frac{1}{\|e^{-t\alpha_{m}}\|_{L_{1}}} \int_{X} |\sigma_{k,t}^{m}| e^{\omega} 
= \frac{1}{\|e^{-t\alpha_{m}}\|_{L_{1}}} \int_{X}e^{- t\alpha_m }|\sigma_{k,0}^{m}|e^{\omega}   
\end{equation}

Let $Q^{m}_{k}(\tilde{x}_{1}, \cdots, \tilde{x}_{k}) d\tilde{x}_{1} \wedge \cdots \wedge d\tilde{x}_{k}$ is the $k$-form determined by fiber integral the $n$-form $|\sigma_{k,0}^{m}|e^{\omega}$ along fiber $\mu^{-1}(\tilde{x}_{1}, \cdots, \tilde{x}_{k})$.
 By equation (\ref{eq4-0-4}) and Lemma \ref{conv} , we obtain:

\begin{align*}
\lim_{t \rightarrow \infty}  \frac{\|\sigma_{k,t}^{m}\|_{L_{1}}}{\|e^{-t\alpha_{m}}\|_{L_{1}}}&=\lim_{t \rightarrow \infty}  \int_{X}\frac{e^{-t\alpha_{m}}}{\|e^{-t\alpha_{m}}\|_{L_{1}}}|\sigma_{k,0}^{m}|e^{\omega}\\
&=  \lim_{t \rightarrow \infty} \int_{\Delta_{k}}\frac{e^{-t\hat{\alpha}_{m}}}{\|e^{-t\alpha_{m}}\|_{L_{1}}}  Q^{m}_{k}(\tilde{x}_{1}, \cdots, \tilde{x}_{k})d\tilde{x}_{1} \wedge \cdots \wedge d\tilde{x}_{k} = Q_{k}^{m}(0,\cdots,0)\\
&
= \int_{X_{k}^{0}} |\sigma_{k,0}^{m}| \vol_{k}^{0}.
\end{align*}
We denote 
\begin{equation}\label{eq4-0-5} \lim_{t \rightarrow \infty}  \frac{\|\sigma_{k,t}^{m}\|_{L_{1}}}{\|e^{-t\alpha_{m}}\|_{L_{1}}}=:c_{k}^{m} \in \RR\end{equation}
Similarly, we have:

\begin{equation}\label{eq4-0-6}
 \lim_{t\rightarrow \infty}  \int_{X} \frac{e^{-t\alpha_{m}}}{\|e^{-t\alpha_{m}}\|_{L_{1}}}  \langle \sigma_{k,0}^{m}, \phi \rangle e^{\omega}=\int_{X_{k}^{0}} \langle \sigma_{k,0}^{m}, \phi \rangle\vol_{k}^{0}.
\end{equation}

Combine equations (\ref{eq4-0-3}), (\ref{eq4-0-5})and (\ref{eq4-0-6}), we obtain:
\begin{align*}
\lim_{t \rightarrow \infty}  \int_{X} \langle \frac{\sigma_{k,t}^{m}}{\|\sigma_{k,t}^{m}\|_{L_{1}}}, \phi \rangle e^{\omega} &
 = \lim_{t \rightarrow \infty}  \frac{1}{\|\sigma_{k,t}^{m}\|_{L_{1}}} \int_{X} \langle \sigma_{k,t}^{m}, \phi \rangle e^{\omega} \\
 &= \lim_{t \rightarrow \infty}  \frac{\|e^{-t\alpha_{m}}\|_{L_{1}}}{\|\sigma_{k,t}^{m}\|_{L_{1}}}  \int_{X} \frac{e^{-t\alpha_{m}}}{\|e^{-t\alpha_{m}}\|_{L_{1}}}  \langle \sigma_{k,0}^{m}, \phi \rangle e^{\omega}\\
 &= \lim_{t \rightarrow \infty}   \frac{\|e^{-t\alpha_{m}}\|_{L_{1}}}{\|\sigma_{k,t}^{m}\|_{L_{1}}}  \lim_{t \rightarrow \infty} \int_{X} \frac{e^{-t\alpha_{m}}}{\|e^{-t\alpha_{m}}\|_{L_{1}}}  \langle \sigma_{k,0}^{m}, \phi \rangle e^{\omega}\\
 &= \frac{1}{c_{k}^{m}} \int_{X^{0}_{k}}\langle \sigma_{k,0}^{m}, \phi \rangle \vol_{k}^{0}=(\delta_{k}^{m}, \phi).
\end{align*}
\end{proof}
\begin{remark}
The authors in \cite{BFMN} studied the real polarization $\shP_{\RR}$, defined by $\ker d \mu$. When $k = n$, $\shP_{k}$ coincides with $\shP_{\RR}$ on the open dense subset $\mathring{X}$ of $X$. Furthermore, the above theorem aligns with the results stated in \cite[Theorem]{BFMN}.
\end{remark}

\section{Appendix}
\subsection{Polarizations on symplectic manifolds}\label{ap-pol}

A step in the process of geometric quantization is to choose a polarization. We first recall the definition of distribution and polarization.

\begin{definition} \label{def4-1}A {\em complex distribution} $\shP$ on a manifold $M$ is a sub-bundle of the complexified tangent bundle $TM\otimes \CC$, such that for every $x \in M$, the fiber $\shP_{x}$ is a subspace of $T_{x}M\otimes \CC $.
\end{definition}

We denote by $\Gamma(M,\shP)$ the space of all vector fields on $M$ that are tangent to $\shP$. Here a vector field $u$ of $M$ is tangent to $\shP$ if for every $x \in M$, $u_{x} \in \shP_{x}$. Then we can define complex polarizations on symplectic manifolds. 

\begin{definition}\label{def4-2} A {\em complex polarization} $\shP$ of a symplectic manifold $(M,\omega)$ is a complex distribution $\shP \subset TM\otimes \CC$ satisfying the following conditions:
\begin{enumerate} [label = (\alph*)]
\item $\shP$ is involutive, i.e. if $u,v \in \Gamma(M,\shP)$, then $ [u,v] \in \Gamma(M,\shP)$;
\item for every $x \in M$, $\shP_{x} \subseteq T_{x}M \otimes {\CC}$ is Lagrangian; and
\item $\rank(\shP \cap \overline{\shP} \cap TM)$ is constant.
\end{enumerate}
\end{definition}

 \begin{remark} Let $\shP$ be a complex polarization on a symplectic manifold $(M, \omega)$, then $\shP$ is called:
 \begin{enumerate}
 \item {\em real polarization}, if $\shP = \overline{\shP}$;
 \item {\em K\"ahler polarization}, if $\shP \cap \overline{\shP} = 0$;
  \item {\em mixed polarization}, if $0 < \rank(\shP \cap \overline{\shP} \cap TM) < n$.
 \end{enumerate}
 \end{remark}

 To study the singular polarization on toric manifolds, we recall the definitions of singular polarizations and smooth section of singular polarizations on symplectic manifolds $M$ as follows. 
\begin{definition}\label{def4-3}
 $\shP \subset  TM \otimes\CC$ is a {\em singular complex distribution} on $M$ if it satisfies:
$\shP_{p} $ is a vector subspace of $ T_{p}M \otimes \CC$, for all points $p \in M$.
\end{definition}

\begin{definition}\label{def4-4}
Let $\shP$ be a singular complex distribution of $TM\otimes \CC$. For any open subset $U$ of $M$, {\em the space of smooth sections of $\shP$ on $U$} is defined by the smooth section of $TM\otimes \CC$ with value in $\shP$, that is,
$$ \Gamma(U, \shP) = \{ v \in \Gamma(U, TM \otimes \CC) \mid  v_{p} \in \shP_{p}, \forall  p\in U \}.$$
In particular, 
$$\Gamma(M, \shP) = \{ v \in \Gamma(M, TM \otimes \CC) \mid  v_{p} \in \shP_{p},  \forall  p\in M \}.$$
\end{definition}

\begin{remark}
In this paper, we only consider such distributions with mild singularities in the sense that they are only singular outside an open dense subset $\check{M}\subset M$. Under our setting, we define the {\em involutive distribution}  as follows
\end{remark}

\begin{definition}\label{def4-5}
Let $\shP$ be a singular complex distribution on $M$. $\shP$ is {\em involutive} if it satisfies,
$$[u,v] \in \Gamma(M,\shP), \mathrm{~for~ any~} u,v \in \Gamma(M, \shP)$$
\end{definition}

\begin{definition}\label{def4-6}
Let $\shP \subset TM \otimes\CC$ be a singular complex distribution on $M$. $\shP$ is called {\em smooth on $\check{M}$} if 
 $\shP|_{\check{M}}$ is a smooth sub-bundle of the tangent bundle $T\check{M} \otimes \CC$.
\end{definition}

\begin{definition}\label{def4-7} Let $\shP$ be a singular complex distribution $\shP$ on $M$ and smooth on $\check{M}$. $\shP$ is called a {\em singular polarization on $M$ and smooth on $\check{M}$}, if it satisfies the following conditions:
\begin{enumerate} [label = (\alph*)]
\item $\shP$ is involutive, i.e. if $u,v \in \Gamma(M,\shP)$, then $ [u,v] \in \Gamma(M,\shP)$;
\item for every $p \in \check{M}$, $\shP_{p} \subseteq T_{p}M \otimes \CC$ is Lagrangian; and
\item  $\rank(\shP \cap \overline{\shP} \cap TM)|_{\check{M}}$ is a constant.
\end{enumerate}
\end{definition}

\begin{definition} \label{def4-8}
 If $\shP$ is a singular polarization on $M$ and smooth on $\check{M}$, and $\shP|_{\check{M}}$ is a smooth polarization on $\check{M}$, then $\shP$ is called 
 \begin{enumerate}
 \item a {\em real polarization}, if $\shP = \overline{\shP}$ on $\check{M}$;
 \item a {\em mixed polarization}, if $0 < \rank(\shP \cap \overline{\shP} \cap TM)|_{\check{M}} < n$.
 \end{enumerate}
\end{definition}

\newpage
\phantomsection
\vspace*{40pt}

\bibliographystyle{amsplain}

\end{document}